\newcommand{\ud}[0]{\,\mathrm{d}}
\newcommand{\dist}[0]{\operatorname{dist}}
\newcommand{\abs}[1]{|#1|}
\newcommand{\Babs}[1]{\Big|#1\Big|}
\newcommand{\Norm}[2]{\|#1\|_{#2}}
\newcommand{\BNorm}[2]{\Big\|#1\Big\|_{#2}}
\newcommand{\pair}[2]{\langle #1,#2 \rangle}
\newcommand{\ave}[1]{\langle #1\rangle}
\newcommand{\bddlin}[0]{\mathscr{L}}
\newcommand{\BMO}[0]{\operatorname{BMO}}
\newcommand{\supp}[0]{\operatorname{supp}}
\newcommand{\R}{\mathbb{R}}
\newcommand{\C}{\mathbb{C}}
\newcommand{\Z}{\mathbb{Z}}
\newcommand{\prob}[0]{\mathbb{P}}
\newcommand{\Exp}[0]{\mathbb{E}}
\newcommand{\radem}[0]{\varepsilon}
\newcommand{\ontop}[2]{\begin{smallmatrix}#1\\#2\end{smallmatrix}}
\numberwithin{equation}{section}
  \let\c@equation\c@subsection
\theoremstyle{plain}
\newtheorem{theorem}[subsection]{Theorem}
\newtheorem{corollary}[subsection]{Corollary}
\newtheorem{lemma}[subsection]{Lemma}
\theoremstyle{definition}
\newtheorem{definition}[subsection]{Definition}
\theoremstyle{remark}
\begin{document}

\title[Weighted, vector-valued BMO functions]{Wavelet expansions for weighted, vector-valued BMO functions}

\author[T.~Hyt\"onen \and O.~Salinas \and B.~Viviani]{Tuomas Hyt\"onen \and Oscar Salinas \and Beatriz Viviani}

\address{T.H.: Department of Mathematics and Statistics, University of Helsinki, Gustaf H\"allstr\"omin katu 2b, FI-00014 Helsinki, Finland}
\email{tuomas.hytonen@helsinki.fi}

\address{O.S. \& B.V.: Instituto de Matem\'atica Aplicada del Litoral, G\"uemes 3450, 3000 Santa Fe, Rep\'ublica Argentina}
\email{salinas@santafe-conicet.gov.ar}
\email{viviani@santafe-conicet.gov.ar}

\date{\today}

\subjclass[2000]{42B35, 42C40, 46E40}
\keywords{Wavelets, bounded mean oscillation, Muckenhoupt weights, Carleson's condition, UMD spaces}


\begin{abstract}
We introduce a scale of weighted Carleson norms, which depend on an integrability parameter \(p\), where \(p=2\) corresponds to the classical Carleson measure condition. Relations between the weighed BMO norm of a vector-valued function \(f:\R\to X\), and the Carleson norm of the sequence of its wavelet coefficients, are established. These extend the results of Harboure--Salinas--Viviani, also in the scalar-valued case when \(p\neq 2\).
\end{abstract}

\maketitle


\section{Introduction}\label{intro}

Given a positive non-decreasing function \(\varrho\) on \((0,\infty)\) (so-called growth function), a weight \(w\) on \(\R\), and a Banach space \(X\), the function space \(\BMO_{\varrho}(w;X)\) consists of those locally Bochner integrable \(f:\R\to X\) for which the norm
\begin{equation*}
   \Norm{f}{\BMO_{\varrho}(w;X)}
   :=\sup_I\frac{1}{w(I)\varrho(\abs{I})}\int_I\Norm{f(x)-\ave{f}_I}{X}\ud x
\end{equation*}
is finite. Here and below, \(\sup_I\) refers to supremum over all finite intervals \(I\subset\R\), and we use the abbreviations \(w(I):=\int_I w(x)\ud x\) and \(\ave{f}_I:=\abs{I}^{-1}\int_I f(x)\ud x\). This is a natural vector-valued generalization of the space \(\BMO_{\varrho}(w):=\BMO_{\varrho}(w;\C)\), which has been recently studied in \cite{HSV,HSV:2004,HSV:2007,Morvidone}.

Recall that \(\psi\in L^2(\R)\) is called an orthonormal wavelet if the functions \(\psi_J(x):=\abs{J}^{-1/2}\psi\big(\abs{J}^{-1}(x-\inf J)\big)\) form an orthonormal basis of \(L^2(\R)\) when \(J\) runs through the set \(\mathscr{D}\) of all dyadic intervals \(J=2^j[k,k+1)\), \(j,k\in\Z\). In~\cite{HSV}, the wavelet coefficients \(\pair{\psi_J}{f}:=\int\psi_J(x)f(x)\ud x\) of \(f\in\BMO_{\varrho}(w)\) were studied, and it was shown --- under appropriate conditions on \(\varrho\), \(w\), and \(\psi\) --- that the norm \(\Norm{f}{\BMO_{\varrho}(w)}\) dominates the following Carleson measure norm, where \(a_J=\pair{\psi_J}{f}\):
\begin{equation*}
\begin{split}
  \Norm{\{a_J\}_{J\in\mathscr{D}}}{C_{\varrho}(w)}
  :=& \sup_I\frac{1}{\varrho(\abs{I})}\Big(\frac{1}{w(I)}\sum_{\ontop{J\in\mathscr{D}}{J\subseteq I}}
     \abs{a_J}^2\frac{\abs{J}}{w(J)}\Big)^{1/2},  \\
  =& \sup_I\frac{1}{\varrho(\abs{I})}\Big(\frac{1}{w(I)}\int_I\Exp\Babs{\sum_{\ontop{J\in\mathscr{D}}{J\subseteq I}}
       \radem_J a_J\Big(\frac{\abs{J}}{w(J)}\Big)^{1/2}\frac{1_J(x)}{\abs{J}^{1/2}}}^2\ud x\Big)^{1/2}.
\end{split}
\end{equation*}
Conversely, the finiteness of this norm implies that the series \(\sum_{J\in\mathscr{D}}a_J\psi_J\) converges, in a suitable sense, to a function in \(\BMO_{\varrho}(w)\) whose norm is controlled by $\Norm{\{a_J\}_{J\in\mathscr{D}}}{C_{\varrho}(w)}$.

On the right side, the \(\radem_J\) designate independent random signs with probability distribution \(\prob(\radem_J=+1)=\prob(\radem_J=-1)=\frac{1}{2}\), and \(\Exp\) is the mathematical expectation. The equality, which is completely elementary for scalar coefficients \(a_J\) (or even Hilbert space -valued ones), no longer holds for \(a_J\in X\), when \(X\) is a more general Banach space.

In our situation, following the experience from other vector-valued problems (e.g., \cite{Bourgain,CPSW,H:Wavelets}), we take the right side as the definition of the vector-valued Carleson norm. In fact, we define a one-parameter scale of such norms by setting
\begin{equation*}
\begin{split}
  &\Norm{\{a_J\}_{J\in\mathscr{D}}}{C_{\varrho}^p(w;X)}  \\
  &:= \sup_I\frac{1}{\varrho(\abs{I})}\Big(\frac{1}{w(I)}\int_I\Exp\BNorm{\sum_{\ontop{J\in\mathscr{D}}{J\subseteq I}}
       \radem_J a_J\Big(\frac{\abs{J}}{w(J)}\Big)^{1/p'}\frac{1_J(x)}{\abs{J}^{1/2}}}{X}^p\ud x\Big)^{1/p}.
\end{split}
\end{equation*}
As usual in vector-valued harmonic analysis, we require that our Banach space have the unconditionality property for martingale difference sequences (UMD). In the present paper, this assumption will never be used directly, but rather through a number of earlier results which have been established in this class of spaces and which will be recalled below.
See e.g.~\cite{Bourgain,CPSW} for more on this notion.

Let us now assume that the weight \(w\) is in the Muckenhoupt class \(A_q\) for some fixed \(q\in(1,2)\), i.e.,
\begin{equation*}
  \sup_I\frac{1}{\abs{I}}\int_I w(x)\ud x
  \Big(\frac{1}{\abs{I}}\int_I w^{-1/(q-1)}(x)\ud x\Big)^{q-1}\leq C.
\end{equation*}
Let the growth function be such that \(\int_1^{\infty}\varrho(s)s^{q-3}\ud s<\infty\), and set
\begin{equation*}
  \eta(t):=t^{2-q}\int_t^{\infty}\frac{\varrho(s)}{s^{3-q}}\ud s
  =\int_1^{\infty}\frac{\varrho(tu)}{u^{3-q}}\ud u.
\end{equation*}

Note that \(\eta(t)\geq\varrho(t)\), since \(\varrho\) is non-decreasing. If \(\varrho\) has the property that
\begin{equation*}
  \varrho(ut)\leq Cu^{\alpha}\varrho(t)
\end{equation*}
for some \(\alpha<2-q\) and all \(t>0\), \(u>1\) (so-called upper-type \(\alpha\)), then conversely \(\eta(t)\lesssim\varrho(t)\), so that \(\eta\) and \(\varrho\) are comparable, but we do not necessarily assume this. However, we do assume that \(\varrho\) has some upper-type \(\alpha<\infty\), which is equivalent to the doubling property \(\varrho(2t)\leq C\varrho(t)\).

Finally, we say that a function \(\phi\) on \(\R\) is of class \(\Psi^u_v\) if
\begin{equation*}
  \abs{\phi(x)}\leq C(1+\abs{x})^{-u},\qquad
  \abs{\phi'(x)}\leq C(1+\abs{x})^{-v}.
\end{equation*}
Under these assumptions, we have the following results:

\begin{theorem}\label{thm:BMOtoCar}
Let \(\psi\in\Psi_{1+\radem}^{2+\radem}\), \(\radem>0\), be an orthonormal wavelet.
If \(f\in\BMO_{\varrho}(w;X)\) and \(p\in(1,q']\), then \(\{\pair{\psi_J}{f}\}_{J\in\mathscr{D}}\in C_{\eta}^p(w;X)\) and 
\begin{equation*}
  \Norm{\{\pair{\psi_J}{f}\}_{J\in\mathscr{D}}}{C_{\eta}^p(w;X)}
  \lesssim\Norm{f}{\BMO_{\varrho}(w;X)}.
\end{equation*}
\end{theorem}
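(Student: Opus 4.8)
The plan is to fix a finite interval $I$ and to bound
\begin{equation*}
  \frac{1}{\eta(\abs I)}\Big(\frac{1}{w(I)}\int_I\Exp\BNorm{\sum_{\ontop{J\in\mathscr{D}}{J\subseteq I}}\radem_J\pair{\psi_J}{f}\Big(\frac{\abs J}{w(J)}\Big)^{1/p'}\frac{1_J(x)}{\abs J^{1/2}}}{X}^p\ud x\Big)^{1/p}
\end{equation*}
by a constant multiple of $\Norm{f}{\BMO_\varrho(w;X)}$, uniformly in $I$. Since each $\psi_J$ has vanishing integral, $\pair{\psi_J}{f}=\pair{\psi_J}{f-\ave{f}_I}$, so I may replace $f$ by $b:=f-\ave{f}_I$ at the outset. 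I then split $b$ according to its distance from $I$, writing $b=\sum_{k\geq0}b_k$ with $b_0=b\,1_{2I}$ and $b_k=b\,1_{2^{k+1}I\setminus2^kI}$ for $k\geq1$; this decomposes $\pair{\psi_J}{b}=\sum_k\pair{\psi_J}{b_k}$ and the random function inside the norm as $\sum_kF_k$. By the triangle inequality in $L^p$ of the product of the probability space with $(I,\ud x)$ it then suffices to estimate each $F_k$ separately and to sum the resulting bounds.

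For the distant pieces $k\geq1$ the decay of the wavelet does the work and the problem is scalar in spirit. Since $J\subseteq I$ while $\supp b_k$ sits at distance $\gtrsim2^k\abs I$ from $J$, the hypothesis $\psi\in\Psi^{2+\radem}_{1+\radem}$ yields $\abs{\psi_J(y)}\lesssim\abs J^{-1/2}(\abs J/(2^k\abs I))^{2+\radem}$ on $\supp b_k$, whence
\begin{equation*}
  \Norm{\pair{\psi_J}{b_k}}{X}\lesssim\abs J^{-1/2}\Big(\frac{\abs J}{2^k\abs I}\Big)^{2+\radem}\int_{2^{k+1}I}\Norm{f-\ave{f}_I}{X}\ud y.
\end{equation*}
Telescoping the averages $\ave{f}_{2^jI}$ across the scales $1\leq j\leq k+1$ and using the doubling of $w$ and $\varrho$, the last integral is $\lesssim(k+1)\,w(2^{k+1}I)\,\varrho(2^{k+1}\abs I)\,\Norm{f}{\BMO_\varrho(w;X)}$. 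The surplus factor $(\abs J/2^k\abs I)^{2+\radem}$ renders the sum over $J\subseteq I$ absolutely convergent, so that discarding the randomness and estimating by the triangle inequality in $X$ already yields the desired bound; carrying out the $J$-sum and comparing $w(2^{k+1}I)$ with the local masses $w(J)$ through the $A_q$ bound $w(Q')/w(Q)\lesssim(\abs{Q'}/\abs Q)^q$ converts the raw wavelet decay into a net annular weight $2^{-k(2-q)}\varrho(2^k\abs I)$ (it is precisely at this bookkeeping that $p\leq q'$ is used, to keep the power $(\cdot)^{1/p'}$ on the side where the $A_q$ estimates apply). Summing over $k\geq1$ and discretizing the integral defining $\eta$ then reproduces a constant multiple of $\eta(\abs I)$.

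The genuinely vector-valued point, and the main obstacle, is the local term $F_0$ arising from $b_0=b\,1_{2I}$: here the wavelet tails no longer help and the cancellation among the random signs must be retained, so a naive triangle inequality in $X$ would be far too lossy. At this step I would invoke the UMD-based randomized (weighted) Carleson embedding recalled above, which bounds
\begin{equation*}
  \frac{1}{w(I)}\int_I\Exp\BNorm{\sum_{\ontop{J\in\mathscr{D}}{J\subseteq I}}\radem_J\pair{\psi_J}{b_0}\Big(\frac{\abs J}{w(J)}\Big)^{1/p'}\frac{1_J(x)}{\abs J^{1/2}}}{X}^p\ud x
\end{equation*}
by a constant times the $p$-th power of the weighted mean oscillation of $f$ over $2I$, namely $w(2I)^{-1}\int_{2I}\Norm{f-\ave{f}_{2I}}{X}\ud y\lesssim\varrho(\abs I)\Norm{f}{\BMO_\varrho(w;X)}\leq\eta(\abs I)\Norm{f}{\BMO_\varrho(w;X)}$, where the last inequality is the elementary bound $\eta\geq\varrho$. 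This is the only place where UMD enters in an essential way (through the recalled result), and it absorbs all the loss that the distant, essentially scalar, terms avoid by their decay.

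Combining the local estimate with the summed distant estimates and dividing by $\eta(\abs I)$ gives the asserted bound uniformly in $I$, hence the theorem. The one delicate point to watch throughout is the interaction of the three exponents — the wavelet decay $2+\radem$, the weight class index $q$, and the integrability parameter $p$ — in the annular bookkeeping; the hypotheses $\int_1^\infty\varrho(s)s^{q-3}\ud s<\infty$ and $p\leq q'$ are exactly what make that bookkeeping close up.
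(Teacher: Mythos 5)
Your treatment of the distant annuli \(k\geq1\) is sound in outline and matches the paper's strategy: the paper likewise decomposes \(f-\ave{f}_I\) over the annuli \(2^{\ell}I\setminus 2^{\ell-1}I\), and your observation that the surplus decay \(\radem\) permits discarding the randomness works (the paper instead retains a square function via Hin\v{c}in's inequality, using only decay exponent \(2\)). One misstatement there, however: your telescoping claim \(\int_{2^{k+1}I}\Norm{f-\ave{f}_I}{X}\ud y\lesssim(k+1)\,w(2^{k+1}I)\,\varrho(2^{k+1}\abs{I})\) is false for general \(w\in A_q\). The correct bound (the paper's telescoping lemma) is \(\sum_{j=1}^{k+1}2^{k+1-j}w(2^jI)\varrho(2^j\abs{I})\), and for, say, \(w(x)=\abs{x}^a\) with \(a\) close to \(-1\) the \(j=1\) term dominates and grows like \(2^k\), not like \(k\). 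This happens to be harmless downstream only because you subsequently apply \(w(2^jI)\lesssim 2^{qj}w(I)\), after which the true bound \(\lesssim 2^{qk}w(I)\varrho(2^k\abs{I})\) is all your bookkeeping needs, and the surplus factor \(2^{-k\radem}\) absorbs the loss; but as stated the intermediate inequality should be repaired.

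The genuine gap is the local term \(F_0\). You dispose of it by invoking a ``UMD-based randomized (weighted) Carleson embedding recalled above,'' but no such result has been recalled or stated, and none exists off the shelf: that embedding \emph{is} the substance of the theorem, so as written you have assumed the hardest step. The paper must assemble it from four ingredients: (i) the elementary H\"older estimate \((\abs{J}/w(J))^{1/p'}\leq\abs{J}^{-1}\int_J w^{-1/p'}(y)\ud y\), turning the weight factors into averages; (ii) the contraction principle together with Bourgain's Stein inequality, to replace these averages by the pointwise value \(w(x)^{-1/p'}\), which converts the expression into a randomized wavelet sum in \(L^p(w^{1-p};X)\); (iii) the weighted randomized wavelet equivalence of Theorem~\ref{thm:waveletLpAp} (itself resting on Figiel's \(T1\) theorem and Rubio de Francia--Ruiz--Torrea extrapolation, and requiring \(w^{1-p}\in A_p\), which holds because \(w\in A_q\subseteq A_{p'}\)); and (iv) Morvidone's weighted John--Nirenberg inequality, to bound \(\Norm{f_1}{L^p(w^{1-p};X)}\) by \(w(I)^{1/p}\varrho(\abs{I})\) (plus a routine term from shifting the average from \(2I\) to \(I\)). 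Relatedly, you misplace where \(p\leq q'\) bites: in the annular bookkeeping the geometric series only needs \(q<p'+1\), whereas \(q\leq p'\) is essential precisely in this local step, both for \(w^{1-p}\in A_p\) and for the admissible exponent range in the John--Nirenberg inequality. Without supplying (i)--(iv), the local --- genuinely vector-valued --- estimate, and with it the proof, is incomplete.
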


\begin{theorem}\label{thm:CarToBMO}
Let \(\psi\in\Psi_{2}^{2+\radem}\), \(\radem>0\), be an orthonormal wavelet.
If \(\{a_J\}_{J\in\mathscr{D}}\in C_{\varrho}^p(w;X)\) for some \(p\in(1,\infty)\), then the series
\(
  \sum_{J\in\mathscr{D}}a_J\psi_J
\)
converges to a function \(f\in\BMO_{\eta}(w;X)\) in the following sense: For every interval \(I\subset\R\), there are ``renormalization constants'' \(c_J\in\C\), $\xi\in X$ such that
\begin{equation*}
  \sum_{J\in\mathscr{D}}a_J(\psi_J-c_J)|_I
\end{equation*}
converges to \(f|_I-\xi\), unconditionally in \(L^s(I;X)\) for some \(s>1\). Here \(f|_I\) means the restriction of \(f\) on \(I\). Moreover,
\begin{equation*}
  \Norm{f}{\BMO_{\eta}(w;X)}
  \lesssim\Norm{\{a_J\}_{J\in\mathscr{D}}}{C_{\varrho}^p(w;X)}
\end{equation*}
\end{theorem}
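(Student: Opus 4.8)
The plan is to fix an arbitrary interval $I\subset\R$ and to produce a single vector $\xi=\xi_I\in X$ together with constants $c_J=c_{J,I}\in\C$ for which $\frac{1}{w(I)\eta(\abs{I})}\int_I\Norm{f(x)-\xi}{X}\ud x\lesssim\Norm{\{a_J\}}{C_\varrho^p(w;X)}$; since $\ave{f}_I$ is, up to a factor $2$, the best constant approximating $f$ in the mean, replacing it by $\xi$ costs nothing and this yields the $\BMO_\eta(w;X)$ bound. I would split the dyadic intervals according to their position and scale relative to $I$: a \emph{local} family $\{J:\abs{J}\le\abs{I},\ J\subseteq 3I\}$, a \emph{small--distant} family $\{J:\abs{J}\le\abs{I},\ J\not\subseteq 3I\}$, and a \emph{large} family $\{J:\abs{J}>\abs{I}\}$. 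The renormalization is $c_J:=0$ for $\abs{J}\le\abs{I}$ and $c_J:=\psi_J(x_I)$ for $\abs{J}>\abs{I}$, where $x_I$ is the centre of $I$; the vector $\xi$ will be assembled as the limit of the corresponding partial sums of the constant corrections.

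For the local family the goal is $\Norm{\sum_{J\text{ local}}a_J\psi_J}{L^p(I,\ud x;X)}\lesssim w(I)^{1/p}\varrho(\abs{I})\Norm{\{a_J\}}{C_\varrho^p(w;X)}$. Writing $a_J=b_J(w(J)/\abs{J})^{1/p'}$ with $b_J=a_J(\abs{J}/w(J))^{1/p'}$ the Carleson--normalized coefficients, the right-hand side of the Carleson norm controls exactly the randomized sums of the $b_J$ against the Haar-type profiles $1_J/\abs{J}^{1/2}$. Passing from these to the deterministic smooth sum $\sum b_J(w(J)/\abs{J})^{1/p'}\psi_J$ is where the UMD hypothesis enters, through the square-function and unconditionality results recalled above; the weight factors are absorbed using that $w\in A_q$ makes $w(J)/\abs{J}$ comparable to the average of $w$ over $J$, and comparable across the boundedly many local generations. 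A H\"older step then converts the unweighted $L^p(\ud x)$ bound into the weighted $L^1$ average, the $A_q$ condition supplying the passage between $\abs{I}$ and $w(I)$.

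The large family is the heart of the argument and the source of $\eta$. Here I would use $\psi\in\Psi_2^{2+\radem}$: for $\abs{J}>\abs{I}$ the mean value theorem gives $\abs{\psi_J(x)-\psi_J(x_I)}\lesssim\abs{I}\,\abs{J}^{-3/2}(1+\dist(I,J)/\abs{J})^{-(2+\radem)}$ on $I$, so the renormalized profiles $(\psi_J-c_J)1_I$ gain a factor $\abs{I}/\abs{J}$ in size over the Haar profiles. Grouping the large $J$ by generation $\abs{J}=2^k\abs{I}$, $k\ge0$, randomizing (Kahane) and applying the Carleson bound one generation at a time, one is left to sum a series whose $k$-th term carries $\varrho(2^k\abs{I})$ against the geometric gain from the smoothness; the convergence of $\int_1^\infty\varrho(s)s^{q-3}\ud s$ turns this sum into $\eta(\abs{I})$, matching $\eta(t)=t^{2-q}\int_t^\infty\varrho(s)s^{q-3}\ud s$. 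The small--distant family is treated analogously but more easily, using the decay $\abs{\psi_J(x)}\lesssim\abs{J}^{-1/2}(\dist(I,J)/\abs{J})^{-2}$ directly with $c_J=0$, its contribution being dominated by the previous two.

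Finally I would establish the stated convergence: UMD-unconditionality of the wavelet system gives that $\sum_J a_J(\psi_J-c_J)|_I$ converges unconditionally in $L^s(I;X)$ for some $s>1$, its limit being $f|_I-\xi$ with $\xi$ the convergent sum of the constant corrections. Combining the three families yields the uniform bound on $\int_I\Norm{f-\xi}{X}\ud x/(w(I)\eta(\abs{I}))$, hence $f\in\BMO_\eta(w;X)$ with the claimed norm control. \textbf{The main obstacle} I anticipate is the large-family summation: extracting exactly the factor $\eta(\abs{I})$ requires balancing the Carleson contribution of every larger scale against the wavelet-smoothness gain while keeping the $A_q$-weight factors $w(J)/\abs{J}$ under control across all generations, and carrying this out through the randomized rather than square-function form of the Carleson norm, so that the UMD transference remains available at each step.
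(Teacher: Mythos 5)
Your decomposition (local / small--distant / large) and your renormalization $c_J=\psi_J(x_I)$ for large $J$, $c_J=0$ otherwise, coincide with the paper's, and your identification of the large scales as the source of $\eta$ is right --- though there the paper needs far less than you deploy: the single-coefficient consequence of the Carleson condition, $\Norm{a_J}{X}\le\varrho(\abs{J})w(J)\abs{J}^{-1/2}$, combined with the $\Psi_2^{2+\radem}$ decay and $w(2^{\ell}I)\lesssim 2^{q\ell}w(I)$, gives \emph{absolute and uniform} convergence of the large and distant parts, with no randomization or UMD at all; so what you flag as the main obstacle is in fact the elementary part. The genuine gap is in your local-family estimate. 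You claim the weight factors $(\abs{J}/w(J))^{1/p'}$ can be absorbed because $w(J)/\abs{J}$ is ``comparable across the boundedly many local generations'': this is false --- the local family contains every scale $\abs{J}\le\abs{I}$, infinitely many generations, and the averages $w(J)/\abs{J}$ are not uniformly comparable as $\abs{J}\to 0$ for a general $A_q$ weight. Consequently your claimed bound $\Norm{\sum a_J\psi_J}{L^p(I;X)}\lesssim w(I)^{1/p}\varrho(\abs{I})$ is not obtainable by the stated means. The paper's device is different: pass to the randomized Haar-profile sum via Theorem~\ref{thm:waveletLpAp}, insert the pointwise bound $w(J)/\abs{J}\le C\,M(1_{4I}w)(x)$ for $x\in J$ by the contraction principle, apply H\"older with exponent $p/s$, and then use the maximal theorem together with the \emph{reverse H\"older inequality} for $w\in A_q$ --- which forces $s>1$ to be taken close to $1$. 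This is precisely why the theorem asserts convergence in $L^s(I;X)$ ``for some $s>1$'' rather than in $L^p$; your sketch would not even see why the exponent degrades.

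A second gap is globalization. You fix one interval and produce $\xi_I$, but the theorem asserts a \emph{single} function $f\in\BMO_{\eta}(w;X)$ on all of $\R$ represented by the renormalized series on every interval. Since your $c_J$ depend on $I$ (through $x_I$ and through which $J$ count as large), the local limits $f_I$ differ from interval to interval, and you never verify compatibility. The paper proves (Lemma~\ref{lem:diffIntervals}) that for $I\subset I'$ the difference $f_{I'}-f_I$ is constant on $I$ --- a short computation resting on the set identity $(\mathscr{J}_2'\cup\mathscr{J}_3')\setminus(\mathscr{J}_2\cup\mathscr{J}_3)=\mathscr{J}_1\setminus\mathscr{J}_1'$ --- and then patches the $f_{I_k}$ along an exhaustion $I_1\subset I_2\subset\cdots\to\R$ to define $f$, after which your per-interval oscillation bound becomes the BMO condition. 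Without this step the estimates for a fixed $I$ do not yet produce the function claimed in the statement.
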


Under the additional assumption that \(\varrho\) be of upper type \(\alpha<2-q\), the growth function \(\eta\) in the theorems may be replaced by \(\varrho\), so that the two results establish a kind of norm equivalence.
The appearance of the ``renormalization constants'' may be seen as a reflection of the fact that constant functions have a vanishing BMO norm, which results in BMO functions only being defined up to additive constants.

Theorems~\ref{thm:BMOtoCar} and \ref{thm:CarToBMO} generalize, on the one hand, the unweighted vector-valued results from~\cite{H:Wavelets} and, on the other, the weighted but scalar-valued theorems from~\cite{HSV}. More precisely, the case \(w\equiv\varrho\equiv 1\) of the above theorems, for a more restricted class of wavelets, is contained in \cite{H:Wavelets}, Proposition~4.1; in this case, the full range \(p\in(1,\infty)\) is admissible in Theorem~\ref{thm:BMOtoCar}. With \(X=\C\) and \(p=2\), the above results essentially reduce to \cite{HSV}, Theorems~A and~B, but a different sense of convergence (involving an appropriate weighted version of the \(H^1\)--BMO-duality) of the series \(\sum_{J\in\mathscr{D}}a_J\psi_J\) was used there.

For \(X=\C\), the Carleson norms have equivalent non-probabilistic expressions thanks to Hin\v{c}in's inequality:
\begin{equation*}
   \begin{split}
  &\Norm{\{c_J\}_{J\in\mathscr{D}}}{C_{\varrho}^p(w;\C)}  \\
  &\eqsim \sup_I\frac{1}{\varrho(\abs{I})}\Big(\frac{1}{w(I)}\int_I\Big[\sum_{\ontop{J\in\mathscr{D}}{J\subseteq I}}
       \abs{c_J}^2\Big(\frac{\abs{J}}{w(J)}\Big)^{2/p'}\frac{1_J(x)}{\abs{J}}\Big]^{p/2}\ud x\Big)^{1/p},
\end{split}
\end{equation*}
with equality for \(p=2\), as already mentioned.
When \(p\neq 2\), both the definition of these norms and their appearance in the above theorems appear to be new even in the scalar case.
Other variants of \(p\)-dependent Carleson norms have been recently used in \cite{HMP,HW:BMO}.

A word on the organization of the paper: the following two sections contain preliminary material, after which Theorems~\ref{thm:BMOtoCar} and~\ref{thm:CarToBMO} are proved in the last two sections.

\subsection*{Acknowledgement}
A major part of the research was carried out during T.H.'s visit to the Instituto de Matem\'atica Aplicada del Litoral, Santa Fe, in November 2007. He wants to thank the colleagues in Santa Fe for their kind hospitality, and the institute for financial support. T.H. was also supported by the Academy of Finland through the projects ``Stochastic and harmonic analysis, interactions and applications'' and ``Vector-valued singular integrals''.

\section{Preliminaries}

\subsection{Vector-valued random series}
Due to the very definition of our Carleson spaces \(C_{\varrho}^p(w;X)\), it is clear that some knowledge on how to handle the vector-valued random series
\begin{equation*}
  \Big(\Exp\BNorm{\sum_{J\subseteq I}\radem_J\xi_J}{X}^p\Big)^{1/p}
\end{equation*}
will be needed in proving the two theorems. In fact, there are only a few basic tricks which we shall employ, and they are recalled in this section.

The basic estimate is Kahane's contraction principle (\cite{Kahane}, Theorem~2.5), which allows to ``pull out'' bounded scalar coefficients,
\begin{equation*}
  \Big(\Exp\BNorm{\sum_{J\subseteq I}\radem_J\lambda_J\xi_J}{X}^p\Big)^{1/p}
  \leq\Big(\Exp\BNorm{\sum_{J\subseteq I}\radem_J\xi_J}{X}^p\Big)^{1/p}
\end{equation*}
if \(\lambda_J\in[-1,1]\); with complex \(\abs{\lambda_J}\leq 1\), one gets a similar estimate with an additional factor \(2\) on the right by simply splitting to real and imaginary parts. A very particular case of this estimate, corresponding to coefficients which are zero except for one, is the fact that the norm of the random series dominates the norm of any of the vectors appearing in it.

A somewhat deeper result, which relies on the UMD property of the space \(X\) and assumes that \(p\in(1,\infty)\), is Bourgain's vector-valued Stein inequality (\cite{Bourgain}, Lemma~8; cf.~\cite{CPSW}, Proposition~3.8), which allows to ``pull out'' averaging operators:
\begin{equation*}
\begin{split}
  &\Big(\int_I\Exp\BNorm{\sum_{J\subseteq I}\radem_J\frac{1_J(x)}{\abs{J}}\int_J f_J(y)\ud y}{X}^p\ud x\Big)^{1/p} \\
  &\leq C\Big(\int_I\Exp\BNorm{\sum_{J\subseteq I}\radem_J 1_J(x)f_J(x)}{X}^p\ud x\Big)^{1/p}.
\end{split}
\end{equation*}

Moreover, Kahane's inequality (\cite{Kahane}, Eq.~$(*)$ on p.~282) permits changing the exponent, in fact,
\begin{equation*}
   \Big(\Exp\BNorm{\sum_{J\subseteq I}\radem_J\xi_J}{X}^p\Big)^{1/p}
   \eqsim\Big(\Exp\BNorm{\sum_{J\subseteq I}\radem_J\xi_J}{X}^r\Big)^{1/r}
\end{equation*}
for all \(p,r\in[1,\infty)\). For \(X=\C\) and \(r=2\), this reduces after simplification to the classical Hin\v{c}in inequality
\begin{equation*}
   \Big(\Exp\Babs{\sum_{J\subseteq I}\radem_J\lambda_J}^p\Big)^{1/p}
   \eqsim\Big(\sum_{J\subseteq I}\abs{\lambda_J}^2\Big)^{1/2}.
\end{equation*}

\subsection{Weighted John--Nirenberg inequality}

For weighted BMO functions, the celebrated John--Nirenberg inequality takes the following form: Given \(w\in A_q\), \(q\in(1,\infty)\), and a growth function \(\varrho\) with the doubling property, the norm \(\Norm{f}{\BMO_{\varrho}(w;X)}\) is equivalent to
\begin{equation*}
  \sup_I\frac{1}{\varrho(\abs{I})}\Big(\frac{1}{w(I)}
   \int_I\Norm{f(x)-\ave{f}_I}{X}^p w^{1-p}(x)\ud x\Big)^{1/p}
\end{equation*}
for all \(p\in(1,q']\); clearly \(p=1\) corresponds to the original norm of this space.

This was first proved in the case \(\varrho\equiv 1\) by Muckenhoupt and Wheeden~\cite{MW}, and then extended to the growth function case by Morvidone~\cite{Morvidone}. Their results are stated in the case \(X=\C\), but an inspection of the proofs reveals that they immediately generalize to the vector-valued context.

\section{Wavelets in weighted Bochner spaces}

Before studying the wavelet expansions of vector-valued BMO functions, we need some results in the \(L^p\) spaces for \(p\in(1,\infty)\). These will be collected in this section. Let us note that the unweighted case has been considered before by Kaiser and Weis~\cite{KW:Wavelets}; the general treatment here is based on similar ideas but does not presuppose any knowledge of their results. The roughness of the wavelets is the same as in \cite{Hernandez}, Section~6.4, where the unweighted, scalar-valued case is treated by maximal function techniques.

\begin{definition}
A function \(K(x,y)\) defined for \(x,y\in\R\times\R\) with \(x\neq y\) is called a standard kernel provided that
\begin{equation*}
  \abs{K(x,y)}\leq C\frac{1}{\abs{x-y}},
\end{equation*}
and for some \(\delta>0\),
\begin{equation*}
  \abs{K(x,y)-K(x',y)}+\abs{K(y,x)-K(y,x')}
  \leq C\frac{\abs{x-x'}^{\delta}}{\abs{x-y}^{1+\delta}}.
\end{equation*}
\end{definition}

Some results concerning Calder\'on--Zygmund operators of the form
\begin{equation*}
  Tf(x)=\int_{\R}K(x,y)f(y)\ud y,
\end{equation*}
for \(x\notin\supp f\), will be needed.
It is convenient to formulate Figiel's \(T1\) theorem~\cite{Figiel} in the following form:

\begin{theorem}
Let \(K\) be a standard kernel and \(T\in\bddlin(L^2(\R))\). Then for every UMD space \(X\) and \(p\in(1,\infty)\), \(T\) is also bounded on \(L^p(\R;X)\).
\end{theorem}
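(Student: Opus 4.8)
The plan is to deduce this convenient reformulation from Figiel's theorem in its original form~\cite{Figiel}, which guarantees boundedness on \(L^p(\R;X)\) (for a UMD space \(X\) and \(p\in(1,\infty)\)) of an operator with a standard kernel, provided that \(T\) enjoys the weak boundedness property and that \(T1,T^*1\in\BMO(\R)\). The whole task is thus to extract these three conditions from the single hypothesis that the scalar operator \(T\) is bounded on \(L^2(\R)\); the genuinely hard analytic work is then carried out inside~\cite{Figiel} and may be invoked as a black box.

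First I would check the weak boundedness property, which is immediate from \(L^2\)-boundedness: for \(L^2\)-normalized bump functions adapted to an interval \(I\), the pairing \(\abs{\pair{T\varphi}{\psi}}\) is bounded by \(\Norm{T}{\bddlin(L^2(\R))}\Norm{\varphi}{L^2}\Norm{\psi}{L^2}\lesssim\abs{I}\), as required. The two remaining conditions, \(T1,T^*1\in\BMO(\R)\), constitute the necessity direction of the scalar David--Journ\'e \(T1\) theorem. Here \(T1\) is understood in the usual weak sense, tested against mean-zero bumps, and the \(\BMO\) estimate on any interval \(I\) follows by splitting a test function into a part supported near \(I\) and a far part: the near part is controlled by \(L^2\)-boundedness and the far part by the smoothness estimate of the standard kernel.

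With these three ingredients verified, Figiel's theorem applies verbatim and produces the stated bound on \(L^p(\R;X)\). For orientation I would recall that the core of~\cite{Figiel} is a decomposition of \(T\) against the Haar system: the matrix coefficients \(\pair{Th_I}{h_J}\) are grouped according to the relative scale and position of the dyadic intervals \(I,J\) into elementary ``shift'' operators, each bounded on \(L^p(\R;X)\) by the UMD property (through the unconditionality of martingale differences, after the non-cancellation of \(T1\) and \(T^*1\) has been removed by subtracting the \(\BMO\)-paraproducts \(\Pi_{T1}\) and \(\Pi_{T^*1}^*\), themselves bounded on \(L^p(\R;X)\) for UMD \(X\)), with operator norms summable over the grouping. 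The UMD hypothesis enters precisely and only at this point, exactly as announced in the introduction. Accordingly, the main obstacle lies not in our short reduction but in this summability estimate internal to Figiel's argument; on our side the only real content is the observation that \(L^2\)-boundedness already supplies the weak boundedness property together with the two \(\BMO\) conditions.
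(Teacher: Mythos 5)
Your reduction is correct and matches the paper's route: the paper states this result without proof, as a convenient repackaging of Figiel's $T1$ theorem~\cite{Figiel}, and the content of that repackaging is exactly what you supply --- for an operator with a standard kernel, $L^2(\R)$-boundedness gives the weak boundedness property trivially and $T1,T^*1\in\BMO(\R)$ by the standard near/far splitting (the necessity half of David--Journ\'e), after which Figiel's theorem applies verbatim. Nothing further is needed, since the UMD-dependent work (the shift decomposition and the paraproducts) is indeed internal to~\cite{Figiel}, just as you say.
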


The following extrapolation result can be extracted out of the more general statements in Theorems~1.2 and~1.3 of \cite{RRT}.

\begin{theorem}
Let \(X\) be a Banach space, \(K\) be a standard kernel and for some \(q\in(1,\infty)\),  \(T\in\bddlin(L^q(\R;X))\). Then for all \(p\in(1,\infty)\) and all \(w\in A_p\), there holds \(T\in\bddlin(L^p(w;X))\).
\end{theorem}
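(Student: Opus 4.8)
The plan is to separate the two qualitatively different jumps hidden in the statement. The hypothesis gives boundedness at a single unweighted exponent, while the conclusion asks for boundedness at every weighted exponent; I would reach it by first passing to the weighted estimate at the single exponent $q$ (this is Calder\'on--Zygmund theory) and then spreading that one weighted bound over all exponents (this is Rubio de Francia extrapolation). The second step is insensitive to the Banach space $X$, so essentially all the genuine work sits in the first.

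First I would record that the hypotheses make $T$ a vector-valued Calder\'on--Zygmund operator in the sense of \cite{RRT}: it is bounded on $L^q(\R;X)$, and its scalar kernel $K$ satisfies the standard size and smoothness bounds, where the latter carry the exponent $\delta>0$. The key technical point is the Fefferman--Stein sharp maximal function $M^{\#}$ together with the pointwise Coifman--Fefferman-type inequality
\begin{equation*}
  M^{\#}\big(\Norm{Tf(\cdot)}{X}\big)(x)\lesssim M_r\big(\Norm{f(\cdot)}{X}\big)(x),\qquad r>1,
\end{equation*}
where $M_r g:=\big(M(\abs{g}^r)\big)^{1/r}$ and $M$ is the Hardy--Littlewood maximal operator. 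Its proof uses only the kernel bounds (for the tail) and the single $L^q(\R;X)$ estimate (for the local part), and is literally the scalar computation once $f$ and $Tf$ are replaced by their $X$-norms. Feeding this into the Fefferman--Stein inequality $\Norm{h}{L^q(w)}\lesssim\Norm{M^{\#}h}{L^q(w)}$, valid for $w\in A_\infty$, and choosing $r$ so close to $1$ that $w\in A_{q/r}$ — possible by the self-improvement (openness) property of the Muckenhoupt classes — yields $\Norm{Tf}{L^q(w;X)}\lesssim\Norm{f}{L^q(w;X)}$ for every $w\in A_q$. (A standard density or truncation argument secures the a priori finiteness needed to apply Fefferman--Stein.)

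Finally I would invoke Rubio de Francia extrapolation. The weighted inequality just established, holding for all $w\in A_q$, upgrades to $\Norm{Tf}{L^p(w;X)}\lesssim\Norm{f}{L^p(w;X)}$ for every $p\in(1,\infty)$ and every $w\in A_p$. The decisive observation is that extrapolation is a statement about the pair of scalar functions $\big(\Norm{f(\cdot)}{X},\Norm{Tf(\cdot)}{X}\big)$ and the construction of the Rubio de Francia iteration weight, in which $X$ plays no role; this is exactly why the scalar machinery of \cite{RRT} transfers without change to the Bochner-space setting.

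I expect the main obstacle to be the sharp-function estimate of the second paragraph: verifying that the standard-kernel regularity really suffices to control the mean oscillation of $\Norm{Tf(\cdot)}{X}$ by $M_r\big(\Norm{f(\cdot)}{X}\big)$ in the vector-valued setting, since everything afterwards — the Fefferman--Stein passage and the $A_q$-to-$A_p$ extrapolation — is formal and blind to $X$. In the write-up one may of course bypass this reconstruction entirely and simply quote Theorems~1.2 and~1.3 of \cite{RRT}, which already package the Calder\'on--Zygmund and extrapolation steps in the generality required here.
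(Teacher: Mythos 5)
Your closing remark is, in effect, the paper's entire proof: the theorem appears there with only the comment that it ``can be extracted out of the more general statements in Theorems~1.2 and~1.3 of \cite{RRT}'', so your fallback coincides exactly with the paper's approach, and your two-step architecture (a weighted bound at the single exponent $q$, then extrapolation over all $p$, both steps blind to $X$ because they only concern the scalar pair $\big(\Norm{f(\cdot)}{X},\Norm{Tf(\cdot)}{X}\big)$) is a faithful sketch of what those theorems package.

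One step in your reconstruction is, however, misstated as written. The local part of the sharp-function estimate, run with \emph{only} the hypothesis $T\in\bddlin(L^q(\R;X))$, gives $M^{\#}\big(\Norm{Tf(\cdot)}{X}\big)\lesssim M_q\big(\Norm{f(\cdot)}{X}\big)$, i.e.\ the exponent $r=q$, not an arbitrary $r>1$: averaging $\Norm{Tf_1}{X}$ over a cube and applying the $L^q$ bound to the localized piece $f_1$ produces $M_q$ and nothing better. Since $q$ may be large, this alone yields weighted bounds only for $w\in A_{p/q}$ with $p>q$, and your choice of ``$r$ so close to $1$ that $w\in A_{q/r}$'' is not yet available. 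The standard repair is to first run the \emph{unweighted} Calder\'on--Zygmund theory: the CZ decomposition (taking $X$-norms throughout) gives the weak $(1,1)$ bound, hence $L^s(\R;X)$ boundedness for $1<s<q$ by interpolation, and for all $s\in(1,\infty)$ by duality since the standard-kernel smoothness is assumed in both variables; with $L^s$ boundedness for $s$ close to $1$ in hand, the local part gives $M_s\leq M_r$ for any prescribed $r>1$ and your argument closes. (Alternatively, Kolmogorov's inequality and the $\delta$-sharp function with $\delta<1$ extract $M\big(\Norm{f(\cdot)}{X}\big)$ directly from the weak $(1,1)$ bound.) Note also a structural redundancy: once the sharp-function estimate holds for every $r>1$, the Fefferman--Stein passage already delivers all $p\in(1,\infty)$ and all $w\in A_p$ in one stroke, by choosing $r$ with $w\in A_{p/r}$, so the final Rubio de Francia extrapolation is superfluous --- conversely, if you do extrapolate, the single weighted estimate at the exponent $q$ suffices. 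Either way the skeleton is sound and is precisely the one underlying \cite{RRT}.
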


The two results obviously imply:

\begin{corollary}
Let \(K\) be a standard kernel and \(T\in\bddlin(L^2(\R))\). Then for every UMD space \(X\), every \(p\in(1,\infty)\) and all \(w\in A_p\), \(T\) is also bounded on \(L^p(w;X)\).
\end{corollary}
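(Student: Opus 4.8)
The plan is simply to chain the two preceding theorems, since the corollary asks for nothing beyond their composition. The key observation is that Figiel's $T1$ theorem already delivers \emph{unweighted} $L^p(\R;X)$-boundedness for a UMD space $X$, while the extrapolation theorem of~\cite{RRT} upgrades boundedness on a \emph{single} Bochner space $L^q(\R;X)$ to boundedness on \emph{all} weighted spaces $L^p(w;X)$ with $w\in A_p$. No further structural information about $T$ is required.

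First I would fix the UMD space $X$ and invoke the first theorem: since $K$ is a standard kernel and $T\in\bddlin(L^2(\R))$, we obtain $T\in\bddlin(L^p(\R;X))$ for every $p\in(1,\infty)$. In particular, choosing (say) $q=2$, we have $T\in\bddlin(L^q(\R;X))$ for this one fixed exponent $q\in(1,\infty)$.

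Next I would feed this into the extrapolation theorem, whose hypotheses are now met verbatim: $X$ is a Banach space, $K$ is a standard kernel, and $T\in\bddlin(L^q(\R;X))$ for the fixed $q\in(1,\infty)$ just produced. Its conclusion is that $T\in\bddlin(L^p(w;X))$ for all $p\in(1,\infty)$ and all $w\in A_p$, which is exactly the assertion of the corollary.

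There is essentially no obstacle here; the only point requiring a moment's care is that the extrapolation theorem takes as input the \emph{Bochner-space} boundedness $T\in\bddlin(L^q(\R;X))$ rather than the scalar hypothesis $T\in\bddlin(L^2(\R))$ from which we started. This is precisely why the $T1$ theorem must be applied first, namely to convert the scalar-valued boundedness into (unweighted) vector-valued boundedness on some $L^q(\R;X)$. Once that bridge is in place, the two results combine immediately.
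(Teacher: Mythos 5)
Your proposal is correct and coincides with the paper's argument: the authors state that ``the two results obviously imply'' the corollary, i.e.\ they likewise chain Figiel's \(T1\) theorem (to pass from \(T\in\bddlin(L^2(\R))\) to \(T\in\bddlin(L^q(\R;X))\) for some \(q\in(1,\infty)\)) with the extrapolation theorem of Rubio de Francia--Ruiz--Torrea. Your remark on why the \(T1\) theorem must be applied first, to supply the Bochner-space hypothesis the extrapolation theorem requires, is exactly the right point of care.
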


In all of the results quoted above, the bound on the norm of \(T\) only depends on the constants implicit in the assumptions, and the dependence is uniform in the sense that a family of operators verifying the assumptions with uniformly bounded constants will also satisfy the conclusions with uniform norm bounds.

Following \cite{Hernandez}, Eq. (2.12) of Chapter~6, let \(\mathscr{R}^0:=\bigcup_{\varepsilon>0}\Psi^{2+\varepsilon}_{1+\varepsilon}\).

\begin{lemma}
Let \(\phi,\psi\in\mathscr{R}^0\) and \(\abs{a_{jk}}\leq 1\). Then
\begin{equation*}
   K(x,y):=\sum_{j,k\in\Z}a_{jk}2^j\phi(2^jx-k)\psi(2^jy-k)
\end{equation*}
is a standard kernel.
\end{lemma}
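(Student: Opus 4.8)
The plan is to verify the two defining properties of a standard kernel --- the size bound and the H\"older-type smoothness bound --- directly from the decay hypotheses. Fix \(\varepsilon>0\) so small that both \(\phi\) and \(\psi\) belong to \(\Psi^{2+\varepsilon}_{1+\varepsilon}\); this is possible since \(\mathscr{R}^0\) is the union over \(\varepsilon>0\) of these classes and a smaller exponent gives a weaker condition. The whole argument rests on a single elementary summation lemma: for exponents \(u,v>1\),
\[
  \sum_{k\in\Z}(1+\abs{a-k})^{-u}(1+\abs{b-k})^{-v}\leq C_{u,v}(1+\abs{a-b})^{-\min(u,v)},
\]
whose proof splits the sum according to whether \(k\) is closer to \(a\) or to \(b\) and uses the integrability of \((1+\abs{\cdot}\,)^{-u}\) for \(u>1\). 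I would state and prove this first, as it is used repeatedly.

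For the size estimate, put \(d:=\abs{x-y}\) and use \(\abs{a_{jk}}\le 1\). For each fixed scale \(j\), the lemma with \(a=2^jx\), \(b=2^jy\), \(u=v=2+\varepsilon\) gives \(\sum_k(1+\abs{2^jx-k})^{-(2+\varepsilon)}(1+\abs{2^jy-k})^{-(2+\varepsilon)}\lesssim(1+2^jd)^{-(2+\varepsilon)}\). It then remains to sum \(\sum_{j\in\Z}2^j(1+2^jd)^{-(2+\varepsilon)}\) over scales, which I would do by splitting at \(2^j\approx d^{-1}\): the two resulting geometric series are each dominated by their extreme term \(2^j\approx d^{-1}\), yielding the desired bound \(\lesssim d^{-1}\).

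For the smoothness estimate I would treat \(\abs{K(x,y)-K(x',y)}\) (variation in the \(\phi\)-variable); the variation \(\abs{K(y,x)-K(y,x')}\) in the \(\psi\)-variable is handled identically because \(\psi\) has the same decay. I would first restrict to the regime \(\abs{x-x'}\le\frac12\abs{x-y}\), which is the one relevant for the standard-kernel conclusions. The heart is the pointwise estimate
\[
  \abs{\phi(2^jx-k)-\phi(2^jx'-k)}\leq C\min\{1,2^j\abs{x-x'}\}\big[(1+\abs{2^jx-k})^{-(1+\varepsilon)}+(1+\abs{2^jx'-k})^{-(1+\varepsilon)}\big],
\]
obtained from the mean value theorem together with \(\abs{\phi'(t)}\le C(1+\abs{t})^{-(1+\varepsilon)}\) when \(2^j\abs{x-x'}\le 1\) (here one checks that the intermediate point keeps comparable decay, since its argument differs from \(2^jx-k\) by at most \(1\)), and from the plain size decay otherwise. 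Inserting this, applying the summation lemma with the mixed exponents \(1+\varepsilon\) and \(2+\varepsilon\) (so \(\min=1+\varepsilon>1\)), and using that \(\abs{2^jx'-2^jy}\gtrsim 2^jd\) in the chosen regime, I reduce everything to
\[
  \sum_{j\in\Z}2^j\min\{1,2^j\abs{x-x'}\}(1+2^jd)^{-(1+\varepsilon)}\lesssim\frac{\abs{x-x'}^{\delta}}{d^{1+\delta}},\qquad \delta:=\min\{1,\varepsilon\}.
\]

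Finally, I would evaluate this last sum by splitting the scales at the two breakpoints \(2^j\approx d^{-1}\) and \(2^j\approx\abs{x-x'}^{-1}\) (correctly ordered since \(\abs{x-x'}\le d\)), estimating each of the three resulting geometric sums by its dominant term: the pieces meeting at \(2^j\approx\abs{x-x'}^{-1}\) produce the factor \(\abs{x-x'}^{\delta}d^{-(1+\delta)}\), while the low-scale piece is smaller precisely because \(\abs{x-x'}\le\frac12 d\). The main technical obstacle is the bookkeeping in this scale-by-scale summation --- tracking which regime dominates according as \(\varepsilon\) is smaller or larger than \(1\) --- together with the careful justification that the derivative bound retains the right spatial decay at the intermediate point in the mean value step; everything else is routine once the summation lemma is in place.
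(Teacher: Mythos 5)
Your proof is correct, and its skeleton matches the paper's: pointwise decay bounds with \(\abs{a_{jk}}\leq 1\), a midpoint-type splitting of the sum in \(k\) (which you package as a standalone summation lemma, where the paper does it inline), a mean-value/size dichotomy across scales, and geometric summation in \(j\). The genuine difference lies in how the two regimes of the smoothness estimate are glued together. You use the unified pointwise bound with the factor \(\min\{1,2^j\abs{x-x'}\}\) and split the scales at the two natural breakpoints \(2^j\approx\abs{x-y}^{-1}\) and \(2^j\approx\abs{x-x'}^{-1}\), obtaining \(\delta=\min\{1,\varepsilon\}\); the paper instead switches between the mean-value bound (phrased with \(\dist(k,2^j[x',x])\), hence valid at every scale, at the price of an extra factor \(1+2^j\abs{x-x'}\)) and the size bound at a single auxiliary scale \(A\), kept free until the end and then chosen by equating the two resulting bounds, which gives \(A=\abs{x-x'}^{-1/2}\abs{x-y}^{-1/2}\) and the better exponent \(\delta=(1+\varepsilon)/2\). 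The geometric-mean crossover genuinely interpolates the two estimates, whereas your fixed breakpoints only recover the worse one; since the standard-kernel definition asks merely for some \(\delta>0\), both suffice. Two small points. First, at \(\varepsilon=1\) exactly your middle region \(\sum 2^{j(1-\varepsilon)}\) contributes a logarithmic factor, so \(\delta=1\) fails by a log; this is harmless because your opening reduction (pass to a smaller \(\varepsilon\), legitimate since the classes \(\Psi^{2+\varepsilon}_{1+\varepsilon}\) are nested downward in \(\varepsilon\)) lets you assume \(\varepsilon<1\) outright --- and note that the paper's own step \(\sum_{2^j\leq A}2^{j(1-\varepsilon)}\lesssim A^{1-\varepsilon}\) tacitly requires \(\varepsilon<1\) as well, so this normalization is needed in both arguments. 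Second, your restriction to \(\abs{x-x'}\leq\frac{1}{2}\abs{x-y}\) agrees with the paper's ``without loss of generality'' and is the standard reading of the smoothness condition; your justification that the intermediate point in the mean value step retains comparable decay (its argument moves by at most \(1\) when \(2^j\abs{x-x'}\leq 1\)) is exactly the care that step requires.
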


\begin{proof}
Without loss of generality, \(x<y\) and \(0<\abs{x'-x}<\abs{x-y}/2\).
\begin{equation*}\begin{split}
  \abs{K(x,y)}
  &\lesssim\sum_j 2^j\sum_k(1+\abs{2^jx-k})^{-1-\varepsilon}(1+\abs{2^j y-k})^{-1-\varepsilon} \\
  &\lesssim\sum_j 2^j\Big(\sum_{k\leq(x+y)2^{j-1}}(1+\abs{2^jx-k})^{-1-\varepsilon}\cdot(1+2^j\abs{x-y})^{-1-\varepsilon}\\
  &\phantom{\lesssim\sum_j 2^j\Big(}
                         +\sum_{k>(x+y)2^{j-1}}(1+2^j\abs{x-y})^{-1-\varepsilon}\cdot(1+\abs{2^jy-k})^{-1-\varepsilon}\Big)\\
  &\lesssim\sum_j 2^j(1+2^j\abs{x-y})^{-1-\varepsilon}\\
  &\lesssim\sum_{j:2^j\abs{x-y}\leq 1} 2^j+\sum_{j:2^j\abs{x-y}>1} 2^{-j\varepsilon}\abs{x-y}^{-1-\varepsilon}
   \lesssim\frac{1}{\abs{x-y}}.
\end{split}\end{equation*}

Consider next the difference
\begin{equation*}
  \abs{K(y,x)-K(y,x')}
  \leq\sum_j\sum_k 2^j(1+\abs{2^j y-k})^{-2-\varepsilon}\abs{\psi(2^j x-k)-\psi(2^j x'-k)};
\end{equation*}
the other type of difference will have a similar bound by symmetry.
By the two obvious estimates either applying the mean value theorem or the triangle inequality,
\begin{equation*}
  \abs{\psi(2^jx-k)-\psi(2^jx'-k)}\lesssim
   \begin{cases}
     2^j\abs{x-x'}\big(1+\dist(k,2^j[x',x])\big)^{-1-\varepsilon}, &\\
     \big(1+\dist(k,2^j[x,x'])\big)^{-2-\varepsilon}. &\\
  \end{cases}
\end{equation*}

Let \(A>0\) be an auxiliary number to be chosen. The part of the sum with small \(j\) is estimates as follows:
\begin{equation}\label{eq:jSmall}\begin{split}
  &\sum_{2^j\leq A}2^j\sum_k 2^j\abs{x-x'}\big(1+\dist(k,2^j[x',x])\big)^{-1-\varepsilon}
    (1+\abs{2^jy-k})^{-1-\varepsilon} \\
  &\lesssim\sum_{2^j\leq A}2^{2j}\abs{x-x'}\Big(\sum_{k\leq(x+y)2^{j-1}}\big(1+\dist(k,2^j[x',x])\big)^{-1-\varepsilon}
    (2^j\abs{x-y})^{-1-\varepsilon} \\
  &\phantom{\lesssim\sum_{2^j\leq A}2^{2j}\abs{x-x'}\Big(}
    +\sum_{k>(x+y)2^{j-1}}\big(2^j\abs{x-y}\big)^{-1-\varepsilon}(1+\abs{2^jy-k})^{-1-\varepsilon}\Big) \\
  &\lesssim\sum_{2^j\leq A}2^{j(1-\varepsilon)}\frac{\abs{x-x'}}{\abs{x-y}^{1+\varepsilon}}\big(1+2^j\abs{x-x'}\big) \\
  &\lesssim A^{1-\varepsilon}\frac{\abs{x-x'}}{\abs{x-y}^{1+\varepsilon}}\big(1+A\abs{x-x'}\big).
\end{split}\end{equation}
As for large \(j\), there holds
\begin{equation}\label{eq:jLarge}\begin{split}
  &\sum_{2^j>A}2^j\sum_k\big(1+\dist(k,2^j[x',x])\big)^{-2-\varepsilon}(1+\abs{2^jy-k})^{-2-\varepsilon} \\
  &\lesssim\sum_{2^j>A}2^j\Big(\sum_{k\leq(x+y)2^{j-1}}\big(1+\dist(k,2^j[x',x])\big)^{-2-\varepsilon}
         (2^j\abs{x-y})^{-2-\varepsilon} \\
  &\phantom{\lesssim\sum_{2^j>A}2^j\Big(}
        +\sum_{k>(x+y)2^{j-1}}(2^j\abs{x-y})^{-2-\varepsilon}
         (1+\abs{2^jy-k})^{-2-\varepsilon}\Big) \\
  &\lesssim\sum_{2^j>A}2^{-(1+\varepsilon)j}\abs{x-y}^{-2-\varepsilon}\big(1+2^j\abs{x-x'}\big) \\
  &\lesssim\frac{A^{-1-\varepsilon}}{\abs{x-y}^{2+\varepsilon}}\big(1+A\abs{x-x'}\big).
\end{split}\end{equation}

Requiring the equality of the two upper bounds and solving for \(A\) gives \(A=\abs{x-x'}^{-1/2}\abs{x-y}^{-1/2}\). Then \(A\abs{x-x'}=\big(\abs{x-x'}/\abs{x-y}\big)^{1/2}\leq 1\), so the upper bound in both~\eqref{eq:jSmall} and~\eqref{eq:jLarge} becomes
\begin{equation*}
  \frac{\abs{x-x'}^{(1+\varepsilon)/2}}{\abs{x-y}^{(3+\varepsilon)/2}},
\end{equation*}
and hence the claim is proved with \(\delta=(1+\varepsilon)/2\).
\end{proof}

\begin{theorem}\label{thm:waveletLpAp}
Let \(\psi,\phi\in\mathscr{R}^0\) be orthonormal wavelets. Let \(X\) be a UMD space, \(1<p<\infty\), and \(w\in A_p\). Then for all \(f\in L^p(w;X)\),
\begin{equation*}\begin{split}
  \Norm{f}{L^p(w;X)}
  &\eqsim\Big(\Exp\BNorm{\sum_{J\in\mathscr{D}}\radem_J\pair{f}{\psi_J}\psi_J}{L^p(w,X)}^p\Big)^{1/p} \\
  &\eqsim\Big(\Exp\BNorm{\sum_{J\in\mathscr{D}}\radem_J\pair{f}{\psi_J}\phi_J}{L^p(w,X)}^p\Big)^{1/p} \\
  &\eqsim\Big(\Exp\BNorm{\sum_{J\in\mathscr{D}}\radem_J\pair{f}{\psi_J}\frac{1_J}{\abs{J}^{1/2}}}{L^p(w;X)}^p\Big)^{1/p}.
\end{split}\end{equation*}
\end{theorem}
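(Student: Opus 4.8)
The plan is to read each of the three right-hand sides as $\big(\Exp\Norm{T_\varepsilon f}{L^p(w;X)}^p\big)^{1/p}$, where $T_\varepsilon$ is the synthesis operator that attaches the random signs $\radem_J$ and a prescribed profile ($\psi_J$, $\phi_J$, or $\abs{J}^{-1/2}1_J$) to the analysis coefficients $\pair{f}{\psi_J}$; by Fubini the expectation may be moved inside the spatial integral. The whole statement then reduces to proving that each $T_\varepsilon$ is bounded and boundedly invertible on $L^p(w;X)$, with constants \emph{uniform} in the sign sequence $\varepsilon=(\radem_J)$.

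For the smooth profiles $\psi_J$ and $\phi_J$ this is immediate from the machinery of the preceding section. Fixing $\varepsilon$ and setting $U_\varepsilon f:=\sum_J\radem_J\pair{f}{\psi_J}\psi_J$ and $V_\varepsilon f:=\sum_J\radem_J\pair{f}{\psi_J}\phi_J$, the associated kernels $\sum_J\radem_J\psi_J(x)\psi_J(y)$ and $\sum_J\radem_J\phi_J(x)\psi_J(y)$ are of the form treated in the Lemma with coefficients $a_{jk}=\radem_J\in\{-1,+1\}$, hence standard kernels with constants independent of $\varepsilon$; moreover orthonormality makes $U_\varepsilon$ and $V_\varepsilon$ isometries on $L^2(\R)$. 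The Corollary therefore bounds them on $L^p(w;X)$ uniformly in $\varepsilon$. Since $\pair{U_\varepsilon f}{\psi_K}=\radem_K\pair{f}{\psi_K}$, the operator $U_\varepsilon$ is an involution, $U_\varepsilon^2=\id$, so $\Norm{f}{L^p(w;X)}=\Norm{U_\varepsilon U_\varepsilon f}{L^p(w;X)}\lesssim\Norm{U_\varepsilon f}{L^p(w;X)}$ for every $\varepsilon$; together with the upper bound this yields the first equivalence. Writing $V_\varepsilon=S\circ U_\varepsilon$, where $S\colon g\mapsto\sum_K\pair{g}{\psi_K}\phi_K$ is the sign-free change-of-wavelet operator (a standard kernel by the Lemma, invertible with inverse $h\mapsto\sum_K\pair{h}{\phi_K}\psi_K$ of the same type), the Corollary applied to $S$ and $S^{-1}$ gives the second equivalence.

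The third, rough profile $\abs{J}^{-1/2}1_J$ is the main obstacle: since $1_{[0,1)}\notin\mathscr{R}^0$, the change-of-profile operator now has a kernel irregular in its first variable, so the Calder\'on--Zygmund route breaks down. I would split $\psi_J=\psi_J 1_J+\psi_J 1_{J^c}$. The local part $\sum_J\radem_J\pair{f}{\psi_J}\psi_J 1_J$ is dominated \emph{pointwise in $x$} by the $1_J$-sum through Kahane's contraction principle, because on $J$ the profile $\abs{J}^{1/2}\psi_J$ is a bounded scalar multiple of $1$. The tails $\psi_J 1_{J^c}$ carry the coefficient $\pair{f}{\psi_J}$ to points $x\notin J$ where the $1_J$-sum does not see $J$; to realign them I would organize the intervals $J$ into dyadic annuli about each $x$ and use Bourgain's vector-valued Stein inequality to replace the value at $x$ by the dyadic average over $J$, the decay $\abs{\psi_J(x)}\lesssim\abs{J}^{-1/2}(1+\dist(x,J)/\abs{J})^{-2-\varepsilon}$ making the sum over annuli absolutely convergent, and Kahane's inequality to pass between the various exponents. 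The converse bound is symmetric.

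The genuinely delicate point, and the one I expect to cost the most work, is that Bourgain's inequality was recalled only in its unweighted form, whereas here the averaging must be controlled in $L^p(w;X)$. One therefore needs its $A_p$-weighted, UMD-valued extension --- equivalently, the uniform $L^p(w;X)$-boundedness of the randomized dyadic averaging operators --- which is exactly the place where the hypothesis $w\in A_p$ re-enters in the treatment of the rough profile.
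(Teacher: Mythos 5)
Your handling of the first two equivalences is correct and coincides with the paper's own argument: for fixed signs, the kernels \(\sum_{J}\radem_J\phi_J(x)\psi_J(y)\) fall under the paper's Lemma with coefficients \(a_{jk}=\radem_J\in\{-1,+1\}\), the operators are isometric on \(L^2(\R)\) by orthonormality, and the Corollary (Figiel plus Rubio de Francia--Ruiz--Torrea) gives bounds on \(L^p(w;X)\) uniform in \(\radem\); your explicit inversion, \(U_\radem^2=\id\) and \(V_\radem=S\circ U_\radem\) with \(S^{-1}\) again a Calder\'on--Zygmund operator of the same type, is exactly the detail the paper's one-sentence proof of this part leaves implicit. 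The genuine gap is in the third equivalence, and it is twofold. First, as you yourself concede, your tail realignment rests on an \(A_p\)-weighted, UMD-valued Stein inequality that you neither prove nor cite; the paper only recalls the unweighted version, so within the paper's toolbox this ingredient is simply missing. Even granting it, the realignment is not an application of the inequality as recalled: after decomposing into annuli \(\dist(x,J)\eqsim 2^m\abs{J}\), one must pass from the translates \(J+k\abs{J}\) back to \(J\), and these translates are not adapted to a single dyadic filtration, so one needs shifted grids or common dyadic ancestors, at the cost of an extra factor \(\approx 2^m\) to be absorbed by the decay \(2^{-m(2+\varepsilon)}\) --- bookkeeping your sketch does not set up. Second, the sentence ``the converse bound is symmetric'' is not correct: your local/tail splitting yields \(\Norm{f}{L^p(w;X)}\lesssim\big(\Exp\Norm{\sum_J\radem_J\pair{f}{\psi_J}\abs{J}^{-1/2}1_J}{L^p(w;X)}^p\big)^{1/p}\), but the reverse inequality cannot be obtained by running the same split backwards, since \(1_J/\abs{J}^{1/2}\) is not pointwise dominated by \(\abs{\psi_J}\) on \(J\): a wavelet has zeros, and nothing forces a fixed fraction of its mass to lie on \(J\) itself rather than on a neighbour. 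This direction needs its own idea.

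The paper disposes of both problems without any Stein inequality, by reducing the rough profile to smooth ones via Kahane's contraction principle. For the upper bound it quotes from \cite{H:Wavelets} (p.~134) a finite family \(\Phi\subset\mathscr{R}^0\) of orthonormal wavelets with \(1_J/\abs{J}^{1/2}\leq C\sum_{\phi\in\Phi}\abs{\phi_J}\), so the contraction principle dominates the \(1_J\)-sum by finitely many \(\phi_J\)-sums, each controlled by the already-proved smooth case. For the lower bound it takes a compactly supported orthonormal wavelet \(\phi\in\mathscr{R}^0\) and a shifted sub-grid \(\Lambda(J)=\inf J+\abs{J}\cdot I_1\) with \(\supp\phi_{\Lambda(J)}\subseteq J\), so that \(1_J/\abs{J}^{1/2}\geq c\abs{\phi_{\Lambda(J)}}\); the contraction principle then dominates the \(\phi_{\Lambda(J)}\)-sum by the \(1_J\)-sum, and the map \(\sum_J a_J\phi_{\Lambda(J)}\mapsto\sum_J a_J\psi_J\) is a bounded Calder\'on--Zygmund operator (again by the Lemma, since \(\{\phi_{\Lambda(J)}\}\) is an orthonormal system), which recovers \(\Norm{f}{L^p(w;X)}\). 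To repair your proposal you would have to either prove or import the weighted vector-valued Stein inequality, carry out the shifted-grid realignment, and supply a separate argument for the converse --- or simply adopt the paper's domination trick, which makes the weight enter only through the extrapolation Corollary already in hand.
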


\begin{proof}
The first and second comparison follow from the fact that the operators of the form
\begin{equation*}
  f\mapsto\sum_{J\in\mathscr{D}}\radem_J\pair{f}{\psi_J}\phi_J
\end{equation*}
are uniformly bounded Calder\'on--Zygmund operators.

As for the last comparison, it has been shown in \cite{H:Wavelets} (see the proof on p.~134) that there is finite collection \(\Phi\) of orthonormal wavelets \(\phi\in\mathscr{R}^0\) (in fact even infinitely regular) such that
\begin{equation*}
  \frac{1_{J}}{\abs{J}^{1/2}}\leq C\sum_{\phi\in\Phi}\abs{\phi_J}.
\end{equation*}
Hence, by the contraction principle,
\begin{equation*}
\begin{split}
  \Big(\Exp\BNorm{\sum_{J\in\mathscr{D}}\radem_J\pair{f}{\psi_J}\frac{1_J}{\abs{J}^{1/2}}}{L^p(w;X)}^p\Big)^{1/p}
  &\lesssim\Big(\sum_{\phi\in\Phi}\Exp\BNorm{\sum_{J\in\mathscr{D}}\radem_J\pair{f}{\psi_J}\phi_J}{L^p(w;X)}^p\Big)^{1/p} \\
  &\lesssim\Norm{f}{L^p(w;X)}
\end{split}
\end{equation*}
by the part already proved.

As for the other direction, let \(\phi\in\mathscr{R}^0\) be an orthonormal wavelet with compact support, and fix some \(I_1\subseteq I_0:=[0,1)\) such that \(\phi_{I_1}\) is supported in \(I_0\). Let \(\Lambda:\mathscr{D}\to\mathscr{D}\) be the mapping \(J=\inf J+\abs{J}\cdot I_0\mapsto \inf J+\abs{J}\cdot I_1\). Then \(\{\phi_{\Lambda(J)}\}_{J\in\mathscr{D}}\) is an orthonormal (incomplete) system in \(L^2(\R)\). Because of the support property and regularity, for some \(c\) there holds \(1_J/\abs{J}^{1/2}\geq c\abs{\phi_{\Lambda(J)}}\). Hence by the contraction principle,
\begin{equation*}
\begin{split}
 \Big(\Exp\BNorm{\sum_{J\in\mathscr{D}}\radem_J\pair{f}{\psi_J}\frac{1_J}{\abs{J}^{1/2}}}{L^p(w;X)}^p\Big)^{1/p}
  &\gtrsim\Big(\Exp\BNorm{\sum_{J\in\mathscr{D}}\radem_J\pair{f}{\psi_J}\phi_{\Lambda(J)}}{L^p(w;X)}^p\Big)^{1/p}\\
  &\gtrsim\Norm{f}{L^p(w,X)}
\end{split}
\end{equation*}
since the mapping
\begin{equation*}\begin{split}
  \sum_{J\in\mathscr{D}}a_J\phi_{\Lambda(J)}\mapsto\sum_{J\in\mathscr{D}}a_J\psi_J,\qquad\text{or}\qquad
  \sum_{I\in\mathscr{D}}a_I\phi_{I}\mapsto\sum_{I\in\Lambda(\mathscr{D})}a_I\psi_{\Lambda^{-1}(I)},  
\end{split}\end{equation*}
is a bounded Calder\'on--Zygmund operator.
\end{proof}

\section{BMO implies Carleson}

We now turn to the proof of Theorem~\ref{thm:BMOtoCar}. Fix a function \(f\in\BMO_{\varrho}(w;X)\) and a finite interval \(I\subset\R\). For \(\ell\in\Z_+\), let \(I_{\ell}:=2^{\ell}I\) (the interval concentric with $I$ and $2^{\ell}$ times as long), \(f_1:=(f-\ave{f}_I)1_{2I}\), and \(f_{\ell}:=(f-\ave{f}_I)1_{I_{\ell}\setminus I_{\ell-1}}\) for \(\ell\geq 2\). Then \(f=\ave{f}_I+\sum_{\ell=1}^{\infty}f_{\ell}\) and \(\pair{f}{\psi_J}=\sum_{\ell=1}^{\infty}\pair{f_{\ell}}{\psi_J}\), since \(\psi_J\) has a vanishing integral.

Consider first \(\ell\geq 2\) fixed. Below, we abbreviate the summation condition \(J\in\mathscr{D},\ J\subseteq I\) to \(J\subseteq I\), with the implicit understanding that \(J\) is always a dyadic interval. The estimation starts with
\begin{equation*}\begin{split}
  &\Big(\int\Exp\BNorm{\sum_{J\subseteq I}\radem_J\pair{f_{\ell}}{\psi_J}
    \Big(\frac{\abs{J}}{w(J)}\Big)^{1/p'}\frac{1_J(x)}{\abs{J}^{1/2}}}{X}^p\ud x\Big)^{1/p}\\
  &=\Big(\int\Exp\BNorm{\int f_{\ell}(y)\sum_{J\subseteq I}\radem_J\psi_J(y)1_{I_{\ell}\setminus I_{\ell-1}}(y)
    \Big(\frac{\abs{J}}{w(J)}\Big)^{1/p'}\frac{1_J(x)}{\abs{J}^{1/2}}\ud y}{X}^p\ud x\Big)^{1/p}\\
  &\lesssim\Big(\int\Big\{\int \Norm{f_{\ell}(y)}{X}
    \Big[\sum_{J\subseteq I}\abs{\psi_J(y)}^2
    \Big(\frac{\abs{J}}{w(J)}\Big)^{2/p'}\frac{1_J(x)}{\abs{J}}\Big]^{1/2}\ud y\Big\}^p\ud x\Big)^{1/p}.
\end{split}\end{equation*}
Next, for \(y\in I_{\ell}\setminus I_{\ell-1}\) where \(f_{\ell}\) is supported,
\begin{equation*}\begin{split}
  &\sum_{J\subseteq I}\abs{\psi_J(y)}^2
    \Big(\frac{\abs{J}}{w(J)}\Big)^{2/p'}\frac{1_J(x)}{\abs{J}} \\
  &\lesssim\sum_{J\subseteq I}\frac{1}{\abs{J}}\Big(\frac{\abs{J}}{\abs{I_{\ell}}}\Big)^{4}
     \Big(\frac{\abs{J}}{w(J)}\Big)^{2/p'}\frac{1_J(x)}{\abs{J}}     
  =\frac{1}{\abs{I_{\ell}}^4}\sum_{J\subseteq I}\frac{\abs{J}^{2+2/p'}1_J(x)}{w(J)^{2/p'}} \\
  &\lesssim\frac{1}{\abs{I_{\ell}}^4}\sum_{J\subseteq I}\frac{\abs{J}^{2+2/p'}1_J(x)}{w(I)^{2/p'}}
    \Big(\frac{\abs{I}}{\abs{J}}\Big)^{2q/p'}
  =\frac{1}{\abs{I_{\ell}}^4}\frac{\abs{I}^{2q/p'}}{w(I)^{2/p'}}
    \sum_{J\subseteq I}\abs{J}^{2(1+1/p'-q/p')}1_J(x) \\
  &\lesssim\frac{1}{\abs{I_{\ell}}^4}\frac{\abs{I}^{2q/p'}}{w(I)^{2/p'}}
    \abs{I}^{2(1+1/p'-q/p')}1_I(x) 
   =\Big(\frac{\abs{I}}{\abs{I_{\ell}}}\Big)^4\Big(\frac{\abs{I}}{w(I)}\Big)^{2/p'}\frac{1_I(x)}{\abs{I}^2},
\end{split}\end{equation*}
where the three inequalities were applications of the pointwise bound for \(\psi\in\mathscr{R}^0\), the estimate \(w(I)/w(J)\leq C\big(\abs{I}/\abs{J}\big)^q\) for \(w\in A^q\), and finally the sum of a geometric progression where \(q\leq p'<p'+1\).

Substituting back,
\begin{equation}\label{eq:thmAmain}\begin{split}
  &\Big(\int\Exp\BNorm{\sum_{J\subseteq I}\radem_J\pair{f_{\ell}}{\psi_J}
    \Big(\frac{\abs{J}}{w(J)}\Big)^{1/p'}\frac{1_J(x)}{\abs{J}^{1/2}}}{X}^p\ud x\Big)^{1/p}\\
  &\lesssim\Big(\int\Big\{\int \Norm{f_{\ell}(y)}{X}
    \Big(\frac{\abs{I}}{\abs{I_{\ell}}}\Big)^2\Big(\frac{\abs{I}}{w(I)}\Big)^{1/p'}\frac{1_I(x)}{\abs{I}}\ud y\Big\}^p\ud x\Big)^{1/p} \\
  &= \frac{2^{-2\ell}}{w(I)^{1/p'}}\int\Norm{f_{\ell}(y)}{X}\ud y
  \leq \frac{2^{-2\ell}}{w(I)^{1/p'}}\int_{2^{\ell}I}\Norm{f(y)-\ave{f}_{I}}{X}\ud y,
\end{split}\end{equation}
where the equality involved computing the trivial integration in \(x\).

The following lemma is needed:

\begin{lemma}
If \(\Norm{f}{\BMO_{\varrho}(w;X)}\leq 1\), then
\begin{equation*}
  \int_{2^{\ell}I}\Norm{f(y)-\ave{f}_{I}}{X}\ud y
  \lesssim\sum_{k=1}^{\ell}2^{\ell-k}w(2^k I)\varrho(2^k\abs{I}).
\end{equation*}
\end{lemma}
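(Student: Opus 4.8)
The plan is to exploit a telescoping argument along the dyadic chain of dilates \(I_k:=2^k I\), \(k=0,1,\dots,\ell\), with \(I_0=I\) and \(I_\ell=2^\ell I\). The only ingredient I will use is the defining estimate of the (normalized) BMO norm: since \(\Norm{f}{\BMO_{\varrho}(w;X)}\leq 1\), every interval \(J\) satisfies \(\int_J\Norm{f(x)-\ave{f}_J}{X}\ud x\leq w(J)\varrho(\abs{J})\). Neither the doubling of \(\varrho\), the \(A_q\)-property of \(w\), nor the John--Nirenberg inequality is needed here.

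First I would split the integrand on the largest dilate by the triangle inequality,
\begin{equation*}
  \Norm{f(y)-\ave{f}_I}{X}\leq\Norm{f(y)-\ave{f}_{2^\ell I}}{X}+\Norm{\ave{f}_{2^\ell I}-\ave{f}_I}{X},
\end{equation*}
and integrate over \(2^\ell I\). The first term contributes \(\int_{2^\ell I}\Norm{f(y)-\ave{f}_{2^\ell I}}{X}\ud y\leq w(2^\ell I)\varrho(2^\ell\abs{I})\) straight from the BMO bound; this is exactly the \(k=\ell\) summand (with prefactor \(2^{\ell-\ell}=1\)), hence it is absorbed into the claimed sum.

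The second, more substantial, term equals \(2^\ell\abs{I}\cdot\Norm{\ave{f}_{2^\ell I}-\ave{f}_I}{X}\), and this is where the telescoping enters. For two consecutive dilates I would estimate
\begin{equation*}
  \Norm{\ave{f}_{I_{k-1}}-\ave{f}_{I_k}}{X}
  \leq\frac{1}{\abs{I_{k-1}}}\int_{I_{k-1}}\Norm{f(x)-\ave{f}_{I_k}}{X}\ud x
  \leq\frac{1}{\abs{I_{k-1}}}w(I_k)\varrho(\abs{I_k})
  =\frac{2^{1-k}}{\abs{I}}w(2^k I)\varrho(2^k\abs{I}),
\end{equation*}
using \(I_{k-1}\subset I_k\) to enlarge the domain of integration and then the BMO bound on \(I_k\). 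Summing over \(k=1,\dots,\ell\) bounds \(\Norm{\ave{f}_{2^\ell I}-\ave{f}_I}{X}\), and multiplying by \(2^\ell\abs{I}\) yields precisely \(\sum_{k=1}^{\ell}2^{\ell-k}w(2^k I)\varrho(2^k\abs{I})\) up to a harmless factor \(2\). Combining the two contributions gives the claim.

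Since every step is a direct application of the triangle inequality together with the BMO bound, I do not expect a genuine obstacle. The only point that calls for care is tracking the powers of two in the telescoping (note \(\abs{I_{k-1}}=2^{k-1}\abs{I}\)), and observing that comparing \(\ave{f}_I\) \emph{directly} to the single average \(\ave{f}_{2^\ell I}\)---rather than decomposing \(2^\ell I\) into annuli \(I_m\setminus I_{m-1}\)---is what makes the geometric weight \(2^{\ell-k}\) drop out cleanly. The annular decomposition works as well, but then one must interchange the order of summation to recover the same bound.
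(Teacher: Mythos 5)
Your proposal is correct and follows essentially the same route as the paper: split via the largest dilate \(2^{\ell}I\) using the triangle inequality, bound the first term directly by the BMO condition, and telescope the averages \(\ave{f}_{2^{k-1}I}\to\ave{f}_{2^k I}\) by enlarging the domain of integration from \(2^{k-1}I\) to \(2^k I\), exactly as in the paper's proof (and indeed, as you do but the paper omits, one must retain the factor \(\frac{1}{2^{k-1}\abs{I}}\) in the \(k\)th telescoping term, which the paper's displayed inequality drops in a harmless typo). Your bookkeeping of the powers of two is accurate, so nothing needs to be changed.
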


\begin{proof}
\begin{equation*}\begin{split}
  &\int_{2^{\ell}I}\Norm{f(y)-\ave{f}_{I}}{X}\ud y \\
  &\leq\int_{2^{\ell}I}\Norm{f(y)-\ave{f}_{2^{\ell}I}}{X}\ud y
  +\abs{2^{\ell}I}\sum_{k=1}^{\ell}\Norm{\ave{f}_{2^k I}-\ave{f}_{2^{k-1}I}}{X}
\end{split}\end{equation*}
The first term is bounded by \(w(2^{\ell}I)\varrho(2^{\ell}\abs{I})\), while the \(k\)th term in the sum has the estimate
\begin{equation*}\begin{split}
  &\Norm{\ave{f}_{2^k I}-\ave{f}_{2^{k-1}I}}{X}
  \leq\frac{1}{2^{k-1}\abs{I}}\int_{2^{k-1}I}\Norm{\ave{f}_{2^k I}-f(y)}{X}\ud y \\
  &\leq\frac{1}{2^{k-1}\abs{I}}\int_{2^k I}\Norm{\ave{f}_{2^k I}-f(y)}{X}\ud y\leq w(2^k I)\varrho(2^k\abs{I}).
\end{split}\end{equation*}
The assertion follows from the combination of these two estimates
\end{proof}

Continuing from~\eqref{eq:thmAmain} and summing over all \(\ell\geq 2\), it follows that
\begin{equation*}\begin{split}
  &\sum_{\ell=2}^{\infty}\frac{2^{-2\ell}}{w(I)^{1/p'}}\int_{2^{\ell}I}\Norm{f(y)-\ave{f}_{I}}{X}\ud y \\
  &\lesssim\sum_{\ell=2}^{\infty}\frac{2^{-2\ell}}{w(I)^{1/p'}}\sum_{k=1}^{\ell}2^{\ell-k}w(2^k I)\varrho(2^k\abs{I}) \\
  &\lesssim w(I)^{-1/p'}\sum_{k=1}^{\infty}2^{-2k}w(2^k I)\varrho(2^k\abs{I}) \\
  &\lesssim w(I)^{1/p}\sum_{k=1}^{\infty}2^{-2k} 2^{qk}\varrho(2^k\abs{I}) \\
  &\lesssim w(I)^{1/p}\abs{I}^{2-q}\int_{\abs{I}}^{\infty}\frac{\varrho(s)\ud s}{s^{3-q}}=w(I)^{1/p}\eta(\abs{I}).
\end{split}\end{equation*}
This is the desired estimate for the part considered, and it remains to treat \(\ell=1\).

\begin{lemma}
\begin{equation*}
  \Big(\frac{\abs{J}}{w(J)}\Big)^{1/p'}\leq\frac{1}{\abs{J}}\int_J\frac{\ud y}{w(y)^{1/p'}}.
\end{equation*}
\end{lemma}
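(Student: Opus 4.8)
The plan is to recognize the inequality as a direct instance of Jensen's inequality comparing two averages of the weight over \(J\). Dividing numerator and denominator inside the bracket on the left by \(\abs{J}\), the left-hand side becomes \(\ave{w}_J^{-1/p'}\), where \(\ave{w}_J:=\abs{J}^{-1}\int_J w(y)\ud y\) denotes the average of \(w\) over \(J\), while the right-hand side is exactly \(\abs{J}^{-1}\int_J w(y)^{-1/p'}\ud y\), the average of \(w^{-1/p'}\). Thus the claim to be proved reads \(\ave{w}_J^{-1/p'}\le\abs{J}^{-1}\int_J w^{-1/p'}\ud y\).

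First I would check that \(\phi(t):=t^{-1/p'}\) is convex on \((0,\infty)\). Since \(p'\in(1,\infty)\), the exponent \(\alpha:=-1/p'\) is negative, so \(\phi''(t)=\alpha(\alpha-1)t^{\alpha-2}>0\) and \(\phi\) is convex. Then I would apply Jensen's inequality with this \(\phi\), the probability measure \(\abs{J}^{-1}\ud y\) on \(J\), and the positive integrand \(w\):
\begin{equation*}
  \phi\Big(\frac{1}{\abs{J}}\int_J w(y)\ud y\Big)
  \le\frac{1}{\abs{J}}\int_J\phi\big(w(y)\big)\ud y .
\end{equation*}
The left-hand side equals \(\big(w(J)/\abs{J}\big)^{-1/p'}=\big(\abs{J}/w(J)\big)^{1/p'}\) and the right-hand side equals \(\abs{J}^{-1}\int_J w(y)^{-1/p'}\ud y\), which is precisely the assertion.

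I do not expect any real obstacle; the only point needing attention is the direction of the inequality, which is dictated by the convexity (not concavity) of \(\phi\). As an alternative avoiding Jensen, one may argue by Hölder's inequality: writing \(\abs{J}=\int_J w^{1/(1+p')}\,w^{-1/(1+p')}\ud y\) and applying Hölder with the conjugate exponents \(1+p'\) and \((1+p')/p'\) gives
\begin{equation*}
  \abs{J}\le w(J)^{1/(1+p')}\Big(\int_J w^{-1/p'}\ud y\Big)^{p'/(1+p')},
\end{equation*}
and raising both sides to the power \((1+p')/p'\) and dividing by \(w(J)^{1/p'}\abs{J}\) yields the claim. Either route is a one-line computation once the relevant convex function, or pair of conjugate exponents, has been identified.
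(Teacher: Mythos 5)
Your proposal is correct, and your H\"older alternative is in fact essentially verbatim the paper's own proof: the paper writes \(\abs{J}=\int_J w^{\alpha}w^{-\alpha}\) and applies H\"older with \(q=1/\alpha=p'+1\), \(q'=1+1/p'\), which after raising to the power \((1+p')/p'\) and dividing is exactly your display
\begin{equation*}
  \abs{J}\le w(J)^{1/(1+p')}\Big(\int_J w^{-1/p'}\ud y\Big)^{p'/(1+p')}.
\end{equation*}
Your primary route via Jensen is a mild but genuine repackaging: you recognize both sides as averages over \(J\) and reduce the claim to the convexity of \(t\mapsto t^{-1/p'}\) on \((0,\infty)\), which you verify correctly (the exponent \(-1/p'\) is negative, so the second derivative is positive for every \(p'\in(1,\infty)\)). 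The two arguments are of course equivalent --- Jensen for a power function is H\"older with the corresponding conjugate exponents --- but your formulation has two small advantages: the direction of the inequality is forced by convexity rather than having to be checked against the exponent bookkeeping, and it makes plain that the estimate is a universal fact about positive locally integrable functions, requiring nothing about \(w\in A_q\) (a point only implicit in the paper's computation). One detail worth noting, which your write-up handles implicitly: if \(w^{-1/p'}\notin L^1(J)\) the right-hand side is infinite and the claim is vacuous, so integrability poses no obstacle to either route.
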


\begin{proof}
The claim is equivalent to \(\abs{J}^{1+1/p'}\leq w(J)^{1/p'}w^{-1/p'}(J)\). This follows from H\"older's inequality:
\begin{equation*}
  \abs{J}=\int_J w^{\alpha}w^{-\alpha}
  \leq\Big(\int_J w^{\alpha q}\Big)^{1/q}\Big(\int_J w^{-\alpha q'}\Big)^{1/q'}
\end{equation*}
with \(q=1/\alpha=p'+1\), hence \(q'=1+1/p'\).
\end{proof}

By the contraction principle and Stein's inequality,
\begin{equation*}\begin{split}
  &\Big(\int\Exp\BNorm{\sum_{J\subseteq I}\radem_J\pair{f_1}{\psi_J}
     \Big(\frac{\abs{J}}{w(J)}\Big)^{1/p'}\frac{1_J(x)}{\abs{J}^{1/2}}}{X}^p\ud x\Big)^{1/p} \\
  &\lesssim\Big(\int\Exp\BNorm{\sum_{J\subseteq I}\radem_J\pair{f_1}{\psi_J}
     \frac{1}{\abs{J}}\int_J\frac{\ud y}{w(y)^{1/p'}}\frac{1_J(x)}{\abs{J}^{1/2}}}{X}^p\ud x\Big)^{1/p} \\
  &\lesssim\Big(\int\Exp\BNorm{\sum_{J\subseteq I}\radem_J\pair{f_1}{\psi_J}
     \frac{1}{w(x)^{1/p'}}\frac{1_J(x)}{\abs{J}^{1/2}}}{X}^p\ud x\Big)^{1/p} \\
  &=\Big(\int\Exp\BNorm{\sum_{J\subseteq I}\radem_J\pair{f_1}{\psi_J}
     \frac{1_J(x)}{\abs{J}^{1/2}}}{X}^p\frac{\ud x}{w^{p-1}(x)}\Big)^{1/p}. \\
\end{split}\end{equation*}
Since \(w\in A_q\subseteq A_{p'}\) (recalling that \(q\leq p'\)), we have \(w^{1-p}\in A_p\), and the estimate further continues with
\begin{equation*}\begin{split}
  &\lesssim\Norm{f_1}{L^p(w^{1-p};X)}
  =\Big(\int_{2I}\Norm{f(x)-\ave{f}_I}{X}^p\frac{\ud x}{w^{p-1}(y)}\Big)^{1/p} \\
  &\lesssim\Big(\int_{2I}\Norm{f(x)-\ave{f}_{2I}}{X}^p\frac{\ud x}{w^{p-1}(y)}\Big)^{1/p}
    +\big(w^{1-p}(2I)\big)^{1/p}\Norm{\ave{f}_{2I}-\ave{f}_I}{X}.
\end{split}\end{equation*}
The first term is bounded by \(C w(2I)^{1/p}\varrho(2\abs{I})\lesssim w(I)^{1/p}\varrho(\abs{I})\) by Morvidone's weighted John--Nirenberg inequality. By the defining inequality of \(w^{1-p}\in A_p\), there holds \(w^{1-p}(2I)\lesssim w(2I)^{1-p}\abs{2I}^p\), whereas \(\Norm{\ave{f}_{2I}-\ave{f}_I}{X}\lesssim w(2I)\varrho(2\abs{I})\). Hence the bound \(w(I)^{1/p}\varrho(\abs{I})\) is valid also for this term after some simplification.

Altogether, we have shown that
\begin{equation*}
\begin{split}
  \sum_{\ell=1}^{\infty}
  &\Big(\int\Exp\BNorm{\sum_{J\subseteq I}\radem_J\pair{f_{\ell}}{\psi_J}
     \Big(\frac{\abs{J}}{w(J)}\Big)^{1/p'}\frac{1_J(x)}{\abs{J}^{1/2}}}{X}^p\ud x\Big)^{1/p} \\
  &\lesssim w(I)^{1/p}\eta(\abs{I})\Norm{f}{\BMO_{\varrho}(w;X)},
\end{split}
\end{equation*}
and this completes the proof of Theorem~\ref{thm:BMOtoCar} since \(f\) and \(I\) were arbitrary.

\section{Carleson implies BMO}

We now turn to the proof of Theorem~\ref{thm:CarToBMO}.
Let \(\{a_J\}_{J\in\mathscr{D}}\in C_{\varrho}^p(w;X)\), without loss of generality with norm at most \(1\).
Fix a finite interval \(I\subset\R\), and consider the collections of dyadic intervals
\begin{equation*}\begin{split}
  \mathscr{J}_1 &:=\{J\in\mathscr{D};2\abs{J}>\abs{I}\}, \\
  \mathscr{J}_2 &:=\{J\in\mathscr{D};2\abs{J}\leq\abs{I}, 2J\cap 2I=\varnothing \}, \\
  \mathscr{J}_3 &:=\{J\in\mathscr{D};2\abs{J}\leq\abs{I}, 2J\cap 2I\neq\varnothing \},
\end{split}\end{equation*}
and the a priori formal series
\begin{equation*}
  f_1(x):=\sum_{J\in\mathscr{J}_1}a_J[\psi_J(x)-\psi_J(x_I)],\qquad
  f_i(x):=\sum_{J\in\mathscr{J}_i}a_J\psi_J(x),\quad i=2,3,
\end{equation*}
where $x_I$ is the centre of the interval $I$.

In order to prove Theorem~\ref{thm:CarToBMO}, we want to show that \(f_I:=f_1+f_2+f_3\) converges in the asserted sense, and moreover
\begin{equation}\label{eq:toProve}
  \int_I\Norm{f_I(x)}{X}\ud x\lesssim\eta(\abs{I})w(I).
\end{equation}

Suppose for the moment that this is already done.

\begin{lemma}\label{lem:diffIntervals}
Given two intervals $I\subset I'$, the function $f_{I'}-f_I$ is constant on $I$.
\end{lemma}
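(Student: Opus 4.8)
The plan is to collapse the three sums defining $f_I$ into one renormalized series and then compare term by term. For a dyadic $J$ and an interval $I$, set $c_J(I):=\psi_J(x_I)$ when $2\abs{J}>\abs{I}$ and $c_J(I):=0$ otherwise. Since $\mathscr{J}_1,\mathscr{J}_2,\mathscr{J}_3$ partition $\mathscr{D}$, recombining the three sums gives $f_I=\sum_{J\in\mathscr{D}}a_J(\psi_J-c_J(I))$: the summand is $a_J[\psi_J-\psi_J(x_I)]$ precisely for $J\in\mathscr{J}_1$ and $a_J\psi_J$ for $J\in\mathscr{J}_2\cup\mathscr{J}_3$ (the distinction between $\mathscr{J}_2$ and $\mathscr{J}_3$ being immaterial here, as both carry $c_J=0$). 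Each $c_J(I)$ is a \emph{scalar}, and $\psi_J(x_I)$ makes sense pointwise since $\psi\in\mathscr{R}^0$ is continuous.

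First I would place both series in a common space. By the convergence assumed at this stage of the argument, $f_I$ converges unconditionally in $L^{s_1}(I;X)$ and $f_{I'}$ in $L^{s_2}(I';X)$ for some $s_1,s_2>1$; restricting the latter to $I\subset I'$ (the restriction map is norm-decreasing) and setting $s:=\min(s_1,s_2)>1$, the finiteness $\abs{I}<\infty$ yields $L^{s_i}(I;X)\hookrightarrow L^{s}(I;X)$, so both $f_I|_I$ and $f_{I'}|_I$ converge unconditionally in the single space $L^{s}(I;X)$.

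Next I would subtract the two series term by term. Unconditional convergence of $\sum_J u_J$ and $\sum_J v_J$ forces unconditional convergence of $\sum_J(u_J-v_J)$ to the difference of the limits, whence
\[
  f_{I'}|_I-f_I|_I=\sum_{J\in\mathscr{D}}a_J\big(c_J(I)-c_J(I')\big),
\]
unconditionally in $L^{s}(I;X)$. Because $I\subset I'$ gives $\abs{I}\le\abs{I'}$, the scalar $c_J(I)-c_J(I')$ equals $0$ when $2\abs{J}\le\abs{I}$, equals $\psi_J(x_I)$ when $\abs{I}<2\abs{J}\le\abs{I'}$, and equals $\psi_J(x_I)-\psi_J(x_{I'})$ when $2\abs{J}>\abs{I'}$; in every case each summand $a_J(c_J(I)-c_J(I'))$ is a \emph{constant} element of $X$.

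Finally I would invoke closedness. Every finite partial sum is a constant $X$-valued function on $I$, and the constants form a closed subspace of $L^{s}(I;X)$ (the map $\xi\mapsto\xi\,1_I$ being $\abs{I}^{1/s}$ times an isometry of $X$); hence the unconditional limit $f_{I'}|_I-f_I|_I$ is itself constant on $I$, which is the assertion. The case analysis is routine; the only point demanding care is the convergence bookkeeping---aligning the two series in one space $L^{s}(I;X)$ and justifying the term-by-term subtraction through unconditional convergence---so that the cancellation of the $\psi_J$ terms is genuine rather than merely formal.
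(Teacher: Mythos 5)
Your proof is correct and is essentially the paper's own argument in a slightly different packaging: your renormalized series $\sum_{J}a_J(\psi_J-c_J(I))$ with $c_J(I)=\psi_J(x_I)1_{\{2\abs{J}>\abs{I}\}}$ reproduces exactly the paper's index-set bookkeeping ($\mathscr{J}_1'\subseteq\mathscr{J}_1$ and $(\mathscr{J}_2'\cup\mathscr{J}_3')\setminus(\mathscr{J}_2\cup\mathscr{J}_3)=\mathscr{J}_1\setminus\mathscr{J}_1'$), and your three cases for $c_J(I)-c_J(I')$ yield precisely the paper's constant $\sum_{J\in\mathscr{J}_1'}a_J[\psi_J(x_I)-\psi_J(x_{I'})]+\sum_{J\in\mathscr{J}_1\setminus\mathscr{J}_1'}a_J\psi_J(x_I)$. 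Your justification of convergence via termwise subtraction of unconditionally convergent series in a common $L^s(I;X)$ and closedness of the constants matches the paper's closing remark that the constant series converges because it is a difference of convergent function series and a series of constants converges in $L^s$ if and only if it converges in $X$.
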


\begin{proof}
Let $\mathscr{J}_i$, $f_i$ be as above, and $\mathscr{J}_i'$, $f_i'$ denote the corresponding collections and functions related to $I'$ instead of $I$. Thus
\begin{equation*}
  f_I=f_1+f_2+f_3,\qquad f_{I'}=f_1'+f_2'+f_3'.
\end{equation*}
Now clearly $\mathscr{J}_1'\subseteq\mathscr{J}_1$ and $\mathscr{J}_2\cup\mathscr{J}_3\subseteq\mathscr{J}_2'\cup\mathscr{J}_3'$, so we can write
\begin{equation*}
\begin{split}
  f_1'-f_1 &=\sum_{J\in\mathscr{J}_1'}a_J[\psi_J-\psi_J(x_{I'})]-\sum_{J\in\mathscr{J}_1}a_J[\psi_J-\psi_J(x_{I})] \\
    &=\sum_{J\in\mathscr{J}_1'}a_J[\psi_J(x_I)-\psi_J(x_{I'})]
        -\sum_{J\in\mathscr{J}_1\setminus\mathscr{J}_1'}a_J[\psi_J-\psi_J(x_{I})],
\end{split}
\end{equation*}
and
\begin{equation*}
\begin{split}
  (f_2'+f_3')-(f_2+f_3) &=\sum_{J\in\mathscr{J}_2'\cup\mathscr{J}_3'}a_J\psi_J-\sum_{J\in\mathscr{J}_2\cup\mathscr{J}_3}a_J\psi_J\\
    &=\sum_{J\in(\mathscr{J}_2'\cup\mathscr{J}_3')\setminus(\mathscr{J}_2\cup\mathscr{J}_3)}a_J\psi_J.
\end{split}
\end{equation*}
Observing that $(\mathscr{J}_2'\cup\mathscr{J}_3')\setminus(\mathscr{J}_2\cup\mathscr{J}_3)=\mathscr{J}_1\setminus\mathscr{J}_1'$, it follows upon summing up that
\begin{equation*}
  f_{I'}-f_I=\sum_{J\in\mathscr{J}_1'}a_J[\psi_J(x_I)-\psi_J(x_{I'})]
    +\sum_{J\in\mathscr{J}_1\setminus\mathscr{J}_1'}a_J \psi_J(x_{I})
  =\text{constant}.
\end{equation*}
Note that the convergence of this $X$-valued series follows from the fact that it is a sum of the convergent function series above, and a series of $X$-valued constant functions converges in $L^s(\R;X)$ if and only if it converges in $X$.
\end{proof}

Then consider an increasing sequence of intervals \(I_1\subset I_2\subset\ldots\to\R\). By Lemma~\ref{lem:diffIntervals}, there are constants \(\xi_k\in X\) such that \(f_{I_k}|_{I_1}=f_{I_1}+\xi_k\). Then \(f_{I_k}|_{I_{k-1}}-\xi_k = f_{I_{k-1}}-\xi_{k-1}\) on \(I_1\), and hence on all of \(I_{k-1}\),  since \(f_{I_k}|_{I_{k-1}}-f_{I_{k-1}}\) is also a constant.
Thus
\begin{equation*}
  f(x):=f_{I_k}(x)-\xi_k\qquad\text{if }x\in I_k
\end{equation*}
gives a well-defined function on all of \(\R\). If \(I\subset\R\) is any finite interval, then \(I\subset I_k\) for some \(k\), and thus \(f|_I=f_{I_k}|_I-\xi_k=f_I+\xi_I\) for some \(\xi_I\in X\). Thus \eqref{eq:toProve} is just the BMO condition for \(f\) corresponding to the interval \(I\). It hence suffices to prove \eqref{eq:toProve}, with the asserted convergence of the series defining \(f_I\).

We first deal with \(f_1\) and \(f_2\). For them, not only does the convergence happen in a much stronger sense, but also we only need to exploit a rather weak consequence of the assumed Carleson estimate, namely the following bound for individual terms:
\begin{equation}\label{eq:individual}
  \Norm{a_J}{X}\leq\varrho(\abs{I})w(I)\,\abs{I}^{-1/2}.
\end{equation}

\begin{lemma}\label{lem:f1}
The series defining \(f_1(x)\) converges absolutely and uniformly for \(x\in I\), and the limit satisfies \(\abs{I}\cdot\Norm{f_1(x)}{X}\lesssim w(I)\eta(\abs{I})\).
\end{lemma}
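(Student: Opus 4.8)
The plan is to estimate $f_1(x)$ pointwise for $x\in I$ by the majorant
\[
  \Norm{f_1(x)}{X}\leq\sum_{J\in\mathscr{J}_1}\Norm{a_J}{X}\,\abs{\psi_J(x)-\psi_J(x_I)},
\]
and to show that this majorant is $\lesssim\abs{I}^{-1}w(I)\eta(\abs{I})$ uniformly in $x\in I$; uniform absolute convergence and the asserted bound then follow at once. For the coefficients I would use the individual estimate~\eqref{eq:individual}, which applied at the scale of each $J$ reads $\Norm{a_J}{X}\leq\varrho(\abs{J})w(J)\abs{J}^{-1/2}$. The geometry I would exploit is that every $J\in\mathscr{J}_1$ satisfies $2\abs{J}>\abs{I}$, so that for $x\in I$ the scaled points $\abs{J}^{-1}(x-\inf J)$ and $\abs{J}^{-1}(x_I-\inf J)$ lie within distance $\abs{J}^{-1}\abs{I}/2<1$ of each other.

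The key pointwise bound would come from the mean value theorem together with the derivative decay of $\psi\in\Psi_2^{2+\radem}$: writing $d_J:=\abs{J}^{-1}\dist(x_I,J)$ for the scaled distance, I would obtain
\[
  \abs{\psi_J(x)-\psi_J(x_I)}
  \lesssim\abs{J}^{-1/2}\cdot\abs{J}^{-1}\abs{x-x_I}\cdot(1+d_J)^{-(2+\radem)}
  \lesssim\abs{J}^{-3/2}\abs{I}\,(1+d_J)^{-(2+\radem)},
\]
the point being that, since the segment joining the two scaled points has length $<1$, the decay factor at the intermediate point produced by the mean value theorem is comparable to $(1+d_J)^{-(2+\radem)}$. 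The crucial gain is the factor $\abs{x-x_I}/\abs{J}\lesssim\abs{I}/\abs{J}$, which decays in the scale of $J$ and is ultimately what makes the sum over large scales summable. Combining with the coefficient bound yields the single-term estimate
\[
  \Norm{a_J}{X}\abs{\psi_J(x)-\psi_J(x_I)}
  \lesssim\varrho(\abs{J})\,w(J)\,\abs{I}\,\abs{J}^{-2}\,(1+d_J)^{-(2+\radem)}.
\]

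I would then sum first over the positions at a fixed scale $\abs{J}=2^j>\abs{I}/2$ and afterwards over scales. For the spatial sum the main obstacle is controlling $w(J)$ for intervals $J$ far from $I$: I would split the line into dyadic annuli about $x_I$ and apply the $A_q$ growth bound $w(B(x_I,R))\lesssim(R/\abs{I})^q w(I)$ for $R\geq\abs{I}$, obtaining
\[
  \sum_{\abs{J}=2^j}w(J)\,(1+d_J)^{-(2+\radem)}\lesssim\Big(\frac{2^j}{\abs{I}}\Big)^q w(I),
\]
where the annulus sum converges precisely because the decay exponent $2+\radem$ exceeds $q$, which holds automatically since $q<2$. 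Thus the contribution of scale $2^j$ is $\lesssim\varrho(2^j)\abs{I}2^{-2j}(2^j/\abs{I})^q w(I)=\varrho(2^j)\abs{I}^{1-q}2^{j(q-2)}w(I)$, and summing over $2^j>\abs{I}/2$ while recognizing $\sum_{2^j>\abs{I}/2}\varrho(2^j)2^{j(q-2)}\eqsim\abs{I}^{q-2}\eta(\abs{I})$ from the definition of $\eta$ (using the doubling of $\varrho$) produces $\Norm{f_1(x)}{X}\lesssim\abs{I}^{-1}w(I)\eta(\abs{I})$. I expect this spatial/weight sum to be the only genuinely delicate step; the remainder is a careful but routine bookkeeping of scales.
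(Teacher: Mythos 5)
Your proposal is correct and takes essentially the same route as the paper's proof: the single-coefficient bound $\Norm{a_J}{X}\le\varrho(\abs{J})w(J)\abs{J}^{-1/2}$ extracted from the Carleson norm, the mean value theorem for $\psi_J(x)-\psi_J(x_I)$ producing the crucial factor $\abs{I}/\abs{J}$, the annulus decomposition with the $A_q$ growth bound $w(2^{\ell}I)\lesssim 2^{q\ell}w(I)$, and the dyadic scale sum identified with $\abs{I}^{q-2}\eta(\abs{I})$. One harmless slip: in the paper's convention $\Psi^u_v$ the superscript governs the \emph{function} decay, so for $\psi\in\Psi_2^{2+\radem}$ the derivative decays only like $(1+\abs{x})^{-2}$ rather than $(1+\abs{x})^{-(2+\radem)}$; your argument survives verbatim with exponent $2$, since the annulus sum needs only a decay exponent strictly exceeding $q$, and $q<2$ by hypothesis.
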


\begin{proof}
Using \eqref{eq:individual} and the derivative bound for \(\psi\in\Psi_2^{2+\varepsilon}\),
\begin{equation*}\begin{split}
  \Norm{f_1(x)}{X} &\leq\sum_{J\in\mathscr{J}_1}\Norm{a_J}{X}\cdot\abs{\psi_J(x)-\psi_J(x_I)} \\
   &\lesssim\sum_{J\in\mathscr{J}_1}\varrho(\abs{J})w(J)\abs{J}^{-1/2}\cdot
     \abs{I}\abs{J}^{-3/2}\Big(1+\frac{\dist(I,J)}{\abs{J}}\Big)^{-2} \\
   &\lesssim\abs{I}\sum_{j=0}^{\infty}\varrho(2^j\abs{I})(2^j\abs{I})^{-2}
    \sum_{\abs{J}\in(2^{j-1},2^j]\abs{I}}w(J)\Big(1+\frac{\dist(I,J)}{\abs{J}}\Big)^{-2}.
\end{split}\end{equation*}
The inner sum is comparable with
\begin{equation*}
  w(2^jI)+\sum_{\ell=0}^{\infty}w(2^{j+\ell+1}I\setminus 2^{j+\ell}I)2^{-2\ell}
  \lesssim w(2^jI)+\sum_{\ell=1}^{\infty}w(I)2^{(q+j)\ell}2^{-2\ell}
  \lesssim 2^{jq}w(I),
\end{equation*}
since \(w(2^{\ell}I)/w(I)\lesssim 2^{q\ell}\) for \(w\in A_q\), and \(q<2\).
Substituting back gives
\begin{equation*}\begin{split}
  \Norm{f_1(x)}{X} &\lesssim \abs{I}w(I)\sum_{j=0}^{\infty}\varrho(2^j\abs{I})(2^j\abs{I})^{-2}2^{jq} \\
   &\lesssim\frac{w(I)}{\abs{I}}\abs{I}^{2-q}\int_{\abs{I}}^{\infty}\frac{\varrho(s)}{s^{3-q}}\ud s
    =\frac{w(I)\eta(\abs{I})}{\abs{I}},
\end{split}\end{equation*}
which completes the proof.
\end{proof}

\begin{lemma}
The series defining \(f_2(x)\) converges absolutely and uniformly for \(x\in I\), and the limit satisfies \(\abs{I}\cdot\Norm{f_2(x)}{X}\lesssim w(I)\varrho(\abs{I})\).
\end{lemma}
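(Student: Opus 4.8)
The plan is to follow the pattern of Lemma~\ref{lem:f1}, but to exploit that every $J\in\mathscr{J}_2$ is now both \emph{smaller} than and \emph{separated} from $I$: this lets the size decay of $\psi_J$ (rather than its derivative) carry the estimate, and lets the growth function be frozen at $\varrho(\abs{I})$, which is precisely why the bound here is $w(I)\varrho(\abs{I})$ rather than the $w(I)\eta(\abs{I})$ obtained for $f_1$.

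First I would assemble a termwise bound valid for all $x\in I$. Applying \eqref{eq:individual} to the interval $J$ itself gives $\Norm{a_J}{X}\lesssim\varrho(\abs{J})w(J)\abs{J}^{-1/2}$, while the size estimate for $\psi\in\Psi_2^{2+\radem}$ (using the decay $(1+\abs{\cdot})^{-2}$) gives
\[
  \abs{\psi_J(x)}\lesssim\abs{J}^{-1/2}\Big(1+\frac{\dist(x,J)}{\abs{J}}\Big)^{-2}.
\]
The condition $2J\cap 2I=\varnothing$ forces $\dist(x,J)\geq\dist(I,J)>\tfrac12(\abs{I}+\abs{J})>\abs{J}$ for every $x\in I$, so the last factor is $\lesssim\abs{J}^2\dist(x,J)^{-2}$; multiplying the two bounds yields
\[
  \Norm{a_J}{X}\,\abs{\psi_J(x)}\lesssim\varrho(\abs{J})\,w(J)\,\abs{J}\,\dist(x,J)^{-2}.
\]
Crucially the right-hand side is independent of $x\in I$ (since $\dist(x,J)\geq\dist(I,J)$), so once I show that $\sum_{J\in\mathscr{J}_2}$ of this quantity is finite, absolute and uniform convergence on $I$ follows simultaneously with the size estimate.

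Next I would sum. Because $\abs{J}<\abs{I}$ and $\varrho$ is non-decreasing, $\varrho(\abs{J})\leq\varrho(\abs{I})$ can be pulled out. Since $2J\cap 2I=\varnothing$ forces $J\subseteq\R\setminus 2I$, I group the intervals by the dyadic annulus $A_\ell:=2^{\ell+1}I\setminus 2^{\ell}I$, $\ell\geq1$, in which they lie (each such $J$ meets at most two consecutive annuli, so this is harmless), noting $\dist(x,J)\eqsim 2^{\ell}\abs{I}$ there for $x\in I$. Summing $w(J)\abs{J}$ over all dyadic $J$ assigned to $A_\ell$ scale-by-scale gives $\sum_{J\subseteq A_\ell}w(J)\abs{J}\lesssim\abs{I}\,w(A_\ell)\lesssim\abs{I}\,2^{q\ell}w(I)$, using the doubling $w(2^{\ell+1}I)\lesssim 2^{q\ell}w(I)$ for $w\in A_q$. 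Hence the $A_\ell$-contribution is $\lesssim\varrho(\abs{I})(2^{\ell}\abs{I})^{-2}\abs{I}\,2^{q\ell}w(I)=\varrho(\abs{I})\,2^{(q-2)\ell}\abs{I}^{-1}w(I)$.

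Finally I sum over $\ell\geq1$: the series $\sum_\ell 2^{(q-2)\ell}$ converges precisely because $q<2$, giving $\Norm{f_2(x)}{X}\lesssim\varrho(\abs{I})w(I)\abs{I}^{-1}$, i.e.\ $\abs{I}\cdot\Norm{f_2(x)}{X}\lesssim w(I)\varrho(\abs{I})$ uniformly for $x\in I$. The only delicate point is the bookkeeping of the annular decomposition — assigning each small dyadic $J$ to an annulus and verifying $\dist(x,J)\eqsim 2^\ell\abs{I}$ uniformly in $x\in I$ — after which the decisive ingredient is the interplay between the weight growth exponent $q$ and the decay exponent $2$, which closes exactly under the standing hypothesis $q<2$; the remaining manipulations are routine geometric summations paralleling Lemma~\ref{lem:f1}.
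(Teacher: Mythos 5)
Your proof is correct and follows essentially the same route as the paper's: the termwise bound \(\Norm{a_J}{X}\abs{\psi_J(x)}\lesssim\varrho(\abs{J})w(J)\abs{J}\dist(I,J)^{-2}\) from \eqref{eq:individual} and the pointwise decay of \(\psi\in\Psi_2^{2+\radem}\), followed by a double geometric sum over dyadic scales and annuli using \(w(2^{\ell}I)\lesssim 2^{q\ell}w(I)\), \(q<2\), and \(\varrho(\abs{J})\leq\varrho(\abs{I})\). The only difference is that you sum over annuli first and scales second, while the paper fixes the scale \(\abs{J}\eqsim 2^{-j}\abs{I}\) first and sums over annuli inside; this reordering is immaterial.
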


\begin{proof}
Using \eqref{eq:individual} and the pointwise bound for \(\psi\in\Psi_2^{2+\varepsilon}\) (\(\varepsilon=0\) suffices here),
\begin{equation*}\begin{split}
  \Norm{f_2(x)}{X} &\leq\sum_{J\in\mathscr{J}_2}\Norm{a_J}{X}\cdot\abs{\psi_J(x)} \\
  &\lesssim\sum_{J\in\mathscr{J}_2}\varrho(\abs{J})w(J)\abs{J}^{-1/2}\cdot
    \abs{J}^{-1/2}\Big(\frac{\dist(J,I)}{\abs{J}}\Big)^{-2} \\
  &\lesssim\sum_{j=1}^{\infty}\varrho(2^{-j}\abs{I})(2^{-j}\abs{I})^{-1}
    \sum_{\begin{smallmatrix}\abs{J}\in(2^{-j-1},2^{-j}]\abs{I} \\
          \dist(J,I)>2^{-1}\abs{I}\end{smallmatrix}}w(J)\Big(\frac{\dist(J,I)}{\abs{J}}\Big)^{-2}.
\end{split}\end{equation*}
Similarly to the previous proof, the inner sum is comparable with
\begin{equation*}
  \sum_{\ell=0}^{\infty}w(2^{\ell+1}I\setminus 2^{\ell}I)2^{-(\ell+j)2}
  \lesssim w(I)\sum_{\ell=0}^{\infty}2^{\ell q}2^{-2(\ell+j)}
  \lesssim 2^{-2j} w(I).
\end{equation*}
Substituting back and using the trivial bound \(\varrho(2^{-j}\abs{I})\leq\varrho(\abs{I})\), it follows that
\begin{equation*}
  \Norm{f_2(x)}{X}\lesssim\frac{\varrho(\abs{I})w(I)}{\abs{I}}\sum_{j=1}^{\infty}2^{-j},
\end{equation*}
and the claim follows.
\end{proof}

\begin{lemma}
For a sufficiently small \(s>1\), the series defining \(f_3(x)\) converges unconditionally in \(L^s(I;X)\), and the limit satisfies
\begin{equation*}
  \abs{I}^{1/s'}\Big(\int_I\Norm{f_3(x)}{X}^s\ud x\Big)^{1/s}
  \lesssim w(I)\varrho(\abs{I}).
\end{equation*}
\end{lemma}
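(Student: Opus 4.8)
The plan is to exploit the one deep input not yet used, namely the weighted vector-valued Littlewood--Paley equivalence of Theorem~\ref{thm:waveletLpAp}, which plays the role of ``Parseval's identity'' in this UMD setting, together with the contraction and Kahane principles and the reverse H\"older self-improvement of the $A_q$ weight. The main point will be to convert the deterministic wavelet series into the randomized $1_J/\abs{J}^{1/2}$ model and then to insert the Carleson weight factors.

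First I would reduce to a fixed dilate. Every $J\in\mathscr{J}_3$ satisfies $2\abs{J}\le\abs{I}$ and $2J\cap 2I\neq\varnothing$, so $J\subseteq\widehat I:=CI$ for an absolute constant $C$; since $w\in A_q$ is doubling and $\varrho$ is doubling, $w(\widehat I)\eqsim w(I)$, $\abs{\widehat I}\eqsim\abs{I}$ and $\varrho(\abs{\widehat I})\eqsim\varrho(\abs{I})$. Working first with finite partial sums, I would estimate $\int_I\Norm{f_3}{X}^s\le\int_{\R}\Norm{f_3}{X}^s$ and apply Theorem~\ref{thm:waveletLpAp} with the trivial weight $1\in A_s$ and exponent $s$. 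Since $\pair{f_3}{\psi_J}=a_J$ for $J\in\mathscr{J}_3$ and vanishes otherwise, and each $1_J/\abs{J}^{1/2}$ is supported in $\widehat I$, this together with Kahane's inequality gives
\[
  \Big(\int_I\Norm{f_3}{X}^s\Big)^{1/s}\lesssim A:=\Big(\int_{\widehat I}\Exp\BNorm{\sum_{J\in\mathscr{J}_3}\radem_J a_J\frac{1_J(x)}{\abs{J}^{1/2}}}{X}^s\ud x\Big)^{1/s}.
\]

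The heart of the argument is to insert the Carleson weight factors into $A$. For $x\in J\subseteq\widehat I$ one has the pointwise domination of the scale-average by the Hardy--Littlewood maximal function, $\big(w(J)/\abs{J}\big)^{1/p'}1_J(x)\le\big(M(w1_{\widehat I})(x)\big)^{1/p'}1_J(x)$. Writing $a_J=\big[a_J(\abs{J}/w(J))^{1/p'}\big]\cdot\big(w(J)/\abs{J}\big)^{1/p'}$ and applying, for each fixed $x$, the contraction principle with coefficients in $[0,1]$, lets me trade $a_J$ for the Carleson-normalized coefficient at the cost of the pointwise scalar $M(w1_{\widehat I})(x)^{1/p'}$:
\[
  \Exp\BNorm{\sum_J\radem_J a_J\frac{1_J(x)}{\abs{J}^{1/2}}}{X}^s\le \big(M(w1_{\widehat I})(x)\big)^{s/p'}\Exp\BNorm{\sum_J\radem_J a_J\Big(\frac{\abs{J}}{w(J)}\Big)^{1/p'}\frac{1_J(x)}{\abs{J}^{1/2}}}{X}^s.
\]
Integrating over $\widehat I$ and applying H\"older's inequality with exponents $p/s$ and $(p/s)'$ separates off the maximal-function integral, while Kahane's inequality turns the remaining factor into the $C_{\varrho}^p$-integrand over $\widehat I$, call it $B$; the contraction principle (passing to the full sum over $\{J\subseteq\widehat I\}$) and the normalization $\Norm{\{a_J\}}{C_{\varrho}^p(w;X)}\le1$ give $B\lesssim\varrho(\abs{I})w(I)^{1/p}$. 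Thus $A\lesssim B\cdot\big(\int_{\widehat I}(M(w1_{\widehat I}))^{\theta}\big)^{(p-s)/(ps)}$ with $\theta=\tfrac{sp}{p'(p-s)}$.

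It remains to evaluate the maximal-function integral, and this is where the choice of $s$ enters: for $s>1$ close to $1$ the exponent $\theta$ is close to $1$, in particular $\theta\le 1+\epsilon$ for the reverse H\"older exponent of $w\in A_q$, whence $\int_{\widehat I}(M(w1_{\widehat I}))^{\theta}\lesssim\abs{\widehat I}\,\ave{w}_{\widehat I}^{\theta}\eqsim\abs{I}^{1-\theta}w(I)^{\theta}$. A direct bookkeeping of the exponents then produces the prefactor $w(I)^{1/p'}\abs{I}^{-1/s'}$, so that $A\lesssim w(I)^{1/p'}\abs{I}^{-1/s'}\cdot\varrho(\abs{I})w(I)^{1/p}=\abs{I}^{-1/s'}w(I)\varrho(\abs{I})$, which is exactly the claim after multiplying by $\abs{I}^{1/s'}$. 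Finally, unconditional convergence in $L^s(I;X)$ follows by running the same chain for an arbitrary \emph{finite} subfamily $\mathscr{F}\subseteq\mathscr{J}_3$ in place of $\mathscr{J}_3$: the resulting upper bound is a tail of the $p$-th moment integral over $\widehat I$, which is finite by the Carleson assumption and tends to $0$ as $\mathscr{F}$ exhausts $\mathscr{J}_3$, giving the Cauchy criterion for unconditional convergence. The main obstacle is precisely the mismatch between the \emph{pointwise} weight delivered by Theorem~\ref{thm:waveletLpAp} and the \emph{scale-averaged} factors $(\abs{J}/w(J))^{1/p'}$ of the Carleson norm; resolving it by maximal-function domination plus the contraction principle (rather than the one-directional Stein inequality) is what lets the argument cover the full range $p\in(1,\infty)$.
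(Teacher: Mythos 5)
Your proposal is correct and follows essentially the same route as the paper's own proof: the unweighted case of Theorem~\ref{thm:waveletLpAp} to pass from $\psi_J$ to the randomized $1_J/\abs{J}^{1/2}$ model, insertion of the factors $(\abs{J}/w(J))^{1/p'}$ via the pointwise bound $w(J)/\abs{J}\lesssim M(1_{4I}w)(x)$ on $J$ together with the contraction principle, H\"older with exponent $p/s$, the Carleson condition for the first factor, and the maximal theorem plus reverse H\"older (with $s(p-1)/(p-s)$ close to $1$) yielding the same prefactor $w(I)^{1/p'}\abs{I}^{-1/s'}$. The only differences are cosmetic (integrating over a fixed dilate $\widehat I$ rather than $\R$, and spelling out the Kahane step and the finite-subfamily tail argument for unconditional convergence, which the paper leaves implicit).
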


\begin{proof}
\begin{equation*}\begin{split}
  &\Big(\int_I\Norm{f_3(x)}{X}^s\ud x\Big)^{1/s} \\
  &\lesssim\Big(\int_{\R}\Exp\BNorm{\sum_{J\in\mathscr{J}_3}
    \radem_J a_J\frac{1_J(x)}{\abs{J}^{1/2}}}{X}^s\ud x\Big)^{1/s} \\
  &\lesssim\Big(\int_{\R}\Exp\BNorm{\sum_{J\in\mathscr{J}_3}
    \radem_J a_J\Big(\frac{\abs{J}}{w(J)}\Big)^{1/p'}\frac{1_J(x)}{\abs{J}^{1/2}}
    M(1_{4I}w)^{1/p'}(x)}{X}^s\ud x\Big)^{1/s} \\
  &\lesssim\Big(\int_{\R}\Exp\BNorm{\sum_{J\in\mathscr{J}_3}
    \radem_J a_J\Big(\frac{\abs{J}}{w(J)}\Big)^{1/p'}\frac{1_J(x)}{\abs{J}^{1/2}}}{X}^p\ud x\Big)^{1/p}\\
  &\phantom{\lesssim\abs{I}^{1/s'}}\times\Big(\int_{\R}M(1_{4I}w)^{s(p-1)/(p-s)}(x)\ud x\Big)^{(p-s)/ps}
\end{split}\end{equation*}
by Theorem~\ref{thm:waveletLpAp} (the unweighted version suffices here), the pointwise bound \(w(J)/\abs{J}\leq CM(1_{4I}w)(x)\) for \(x\in I\) together with the contraction principle, and H\"older's inequality with exponent \(p/s\).

The first integral norm is bounded by \(w(I)^{1/p}\varrho(\abs{I})\) due to the Carleson condition, whereas
\begin{equation*}\begin{split}
  &\Big(\int_{\R}M(1_{4I}w)^{s(p-1)/(p-s)}(x)\ud x\Big)^{(p-s)/ps} \\
  &\lesssim\Big(\int_{4I}w^{s(p-1)/(p-s)}(x)\ud x\Big)^{(p-s)/ps} \\
  &=\Big(\frac{1}{4\abs{I}}\int_{4I}w^{s(p-1)/(p-s)}(x)\ud x\Big)^{(p-s)/s(p-1)\times(p-1)/p}(4\abs{I})^{(p-s)/ps} \\
  &\lesssim\Big(\frac{w(4I)}{4\abs{I}}\Big)^{(p-1)/p}\abs{I}^{(p-s)/ps}
   \lesssim w(I)^{1/p'}\abs{I}^{-1/s'}
\end{split}\end{equation*}
by the maximal theorem, the reverse H\"older inequality (provided that \(s\), and then \(s(p-1)/(p-s)\), is taken sufficiently close to \(1\)), and the doubling property of Muckenhoupt weights. Everything combined,
\begin{equation*}
  \Big(\int_I\Norm{f_3(x)}{X}^s\ud x\Big)^{1/s}
  \lesssim w(I)^{1/p}\varrho(\abs{I})\times w(I)^{-1/p'}\abs{I}^{-1/s'}=w(I)\varrho(\abs{I})\abs{I}^{-1/s'},
\end{equation*}
which is again the asserted bound.
\end{proof}

Clearly all the estimates in the previous lemmas were actually stronger than
\begin{equation*}
  \int_I\Norm{f_i(x)}{X}\ud x\lesssim\varrho(\abs{I})w(I);
\end{equation*}
hence the proof of \eqref{eq:toProve}, and then of Theorem~\ref{thm:CarToBMO}, is complete.


\end{document}